\begin{document}

\newtheorem{theorem}{Theorem}    
\newtheorem{proposition}[theorem]{Proposition}
\newtheorem{conjecture}[theorem]{Conjecture}
\def\theconjecture{\unskip}
\newtheorem{corollary}[theorem]{Corollary}
\newtheorem{lemma}[theorem]{Lemma}
\newtheorem{sublemma}[theorem]{Sublemma}
\newtheorem{observation}[theorem]{Observation}
\theoremstyle{definition}
\newtheorem{definition}{Definition}
\newtheorem{notation}[definition]{Notation}
\newtheorem{remark}[definition]{Remark}
\newtheorem{question}[definition]{Question}
\newtheorem{questions}[definition]{Questions}
\newtheorem{example}[definition]{Example}
\newtheorem{problem}[definition]{Problem}
\newtheorem{exercise}[definition]{Exercise}

\numberwithin{theorem}{section} \numberwithin{definition}{section}
\numberwithin{equation}{section}

\def\earrow{{\mathbf e}}
\def\rarrow{{\mathbf r}}
\def\uarrow{{\mathbf u}}
\def\varrow{{\mathbf V}}
\def\tpar{T_{\rm par}}
\def\apar{A_{\rm par}}

\def\reals{{\mathbb R}}
\def\torus{{\mathbb T}}
\def\heis{{\mathbb H}}
\def\integers{{\mathbb Z}}
\def\naturals{{\mathbb N}}
\def\complex{{\mathbb C}\/}
\def\distance{\operatorname{distance}\,}
\def\support{\operatorname{support}\,}
\def\dist{\operatorname{dist}\,}
\def\Span{\operatorname{span}\,}
\def\degree{\operatorname{degree}\,}
\def\kernel{\operatorname{kernel}\,}
\def\dim{\operatorname{dim}\,}
\def\codim{\operatorname{codim}}
\def\trace{\operatorname{trace\,}}
\def\Span{\operatorname{span}\,}
\def\dimension{\operatorname{dimension}\,}
\def\codimension{\operatorname{codimension}\,}
\def\nullspace{\scriptk}
\def\kernel{\operatorname{Ker}}
\def\ZZ{ {\mathbb Z} }
\def\p{\partial}
\def\rp{{ ^{-1} }}
\def\Re{\operatorname{Re\,} }
\def\Im{\operatorname{Im\,} }
\def\ov{\overline}
\def\eps{\varepsilon}
\def\lt{L^2}
\def\diver{\operatorname{div}}
\def\curl{\operatorname{curl}}
\def\etta{\eta}
\newcommand{\norm}[1]{ \|  #1 \|}
\def\expect{\mathbb E}
\def\bull{$\bullet$\ }
\def\C{\mathbb{C}}
\def\R{\mathbb{R}}
\def\Rn{{\mathbb{R}^n}}
\def\Sn{{{S}^{n-1}}}
\def\M{\mathbb{M}}
\def\N{\mathbb{N}}
\def\Q{{\mathbb{Q}}}
\def\Z{\mathbb{Z}}
\def\F{\mathcal{F}}
\def\L{\mathcal{L}}
\def\S{\mathcal{S}}
\def\supp{\operatorname{supp}}
\def\dist{\operatorname{dist}}
\def\essi{\operatornamewithlimits{ess\,inf}}
\def\esss{\operatornamewithlimits{ess\,sup}}
\def\xone{x_1}
\def\xtwo{x_2}
\def\xq{x_2+x_1^2}
\newcommand{\abr}[1]{ \langle  #1 \rangle}

\newcommand{\Norm}[1]{ \left\|  #1 \right\| }
\newcommand{\set}[1]{ \left\{ #1 \right\} }
\def\one{\mathbf 1}
\def\whole{\mathbf V}
\newcommand{\modulo}[2]{[#1]_{#2}}

\def\scriptf{{\mathcal F}}
\def\scriptg{{\mathcal G}}
\def\scriptm{{\mathcal M}}
\def\scriptb{{\mathcal B}}
\def\scriptc{{\mathcal C}}
\def\scriptt{{\mathcal T}}
\def\scripti{{\mathcal I}}
\def\scripte{{\mathcal E}}
\def\scriptv{{\mathcal V}}
\def\scriptw{{\mathcal W}}
\def\scriptu{{\mathcal U}}
\def\scriptS{{\mathcal S}}
\def\scripta{{\mathcal A}}
\def\scriptr{{\mathcal R}}
\def\scripto{{\mathcal O}}
\def\scripth{{\mathcal H}}
\def\scriptd{{\mathcal D}}
\def\scriptl{{\mathcal L}}
\def\scriptn{{\mathcal N}}
\def\scriptp{{\mathcal P}}
\def\scriptk{{\mathcal K}}
\def\frakv{{\mathfrak V}}

\title[Boundedness of sublinear operators]
{Boundedness of sublinear operators on weighted Morrey spaces
and applications}

\author[Zunwei Fu]{Zunwei Fu} 

\author[Shanzhen Lu]{Shanzhen Lu}

\author[Shaoguang Shi]{Shaoguang Shi$^{*}$}
\subjclass[2000]{
Primary 42B20; Secondary 42B25.
}
%
\keywords{Weighted Morrey space; singular integral operator;
commutator.}
\thanks{This work was partially supported by
 NSF of China (Grant Nos. 10901076, 10931001 and 11171345), NSF of Shandong Province (Grant No. ZR2012AQ026), Beijing Natural Science Foundation
(Grant No. 1102023), Program for Changjiang Scholars and Innovative
Research Team in University. This work was also supported by
the Key Laboratory of Mathematics and Complex System(Beijing Normal
University), Ministry of Education, China. \\ \indent $^{*}$Corresponding author.}

\address{Zunwei Fu\\Department of Mathematics\\Linyi
University \\
Linyi 276005\\
P. R. China } \email{zwfu@mail.bnu.edu.cn}

\address{Shanzhen Lu\\School of Mathematical
Sciences\\Beijing Normal University\\Beijing, 100875\\
P. R. China} \email{lusz@bnu.edu.cn}

\address{Shaoguang Shi\\Department of Mathematics\\
Linyi
University \\
Linyi 276005\\
P. R. China}
\email{shishaoguang@yahoo.com.cn}

\maketitle

\begin{abstract}
We study the boundedness of some sublinear
operators on weighted Morrey spaces under certain size conditions. These conditions are satisfied by most of the
operators in harmonic analysis, such as the Hardy-Littlewood maximal operator, Calder\'{o}n-Zygmund singular integral operator, Bochner-Riesz means at the critical index, oscillatory singular operators, singular integral
operators with oscillating kernels and so on. As applications, the regularity in weighted Morrey spaces of strong
solutions to nondivergence elliptic equations with VMO coefficients are established.
\end{abstract}

\section{Introduction and main results} 
As is well known that Morrey \cite{Mo} introduced the classical
Morrey spaces to investigate the local behavior of solutions to
second order elliptic partial differential equations(PDE). We
recall its definition as
$$
M_{p,q}(\mathbb{R}^{n})=\left\{f:\|f\|_{M_{p,q}(\mathbb{R}^{n})}=\sup_{B\subset
\mathbb{R}^{n}}\left(\frac{1}{|B|^{1-\frac{p}{q}}}\int_{B}|f(x)|^{p}dx\right)^{\frac{1}{p}}<\infty\right\},
$$
where $f\in L_{loc}^{p}(\mathbb{R}^{n})$ and $1\leq p\leq
q<\infty.$ Here and after, $B$ denotes any balls in
$\mathbb{R}^{n}$. $M_{p,q}(\mathbb{R}^{n})$ was an expansion of
$L^{p}(\mathbb{R}^{n})$ in the sense that
$M_{p,p}(\mathbb{R}^{n})=L^{p}(\mathbb{R}^{n})$. Morrey found that
many properties of solutions to PDE can be attributed to the
boundedness of some operators on Morrey spaces.

Maximal functions and singular integrals play a key role
in harmonic analysis since maximal functions could control crucial
quantitative information concerning the given functions, despite
their larger size, while singular integrals, Hilbert transform as
it's prototype, nowadays intimately connected with PDE, operator
theory and other fields.

Let $f\in L_{loc}(\mathbb{R}^{n})$. The Hardy-Littlewood(H-L)
maximal function of $f$ is defined by
$$Mf(x)=\sup_{B\ni x}\frac{1}{|B|}\int_{B}|f(y)|dy.$$

The Calder\'{o}n-Zygmund(C-Z) singular integral operator is
defined by
$$
Tf(x)=p.v.\int_{\mathbb{R}^{n}}K(x-y)f(y)\,dy,
$$
where $K$ is a C-Z kernel \cite{G}. Chiarenza and Frasca \cite{CF}
obtained the boundedness of H-L maximal function $Mf(x)$ and C-Z
singular integral operator $T$ on $M_{p,q}(\mathbb{R}^{n})$. For
some works on the boundedness for the multilinear C-Z singular
integral operators on Morrey type spaces, see e.g \cite{GT}.

Let $0<\alpha<n$. The fractional integral is defined by
$$I_{\alpha}f(x)=\int_{\mathbb{R}^{n}}\frac{f(y)}{|y-x|^{n-\alpha}}dy.$$
An early impetus to the study of fractional integrals originated
from the problem of fractional derivation, see e.g. \cite{AS} and
\cite{OS}. Besides it's contributions to harmonic analysis,
fractional integrals also play an essential role in many other
fields. The Hardy-Littlewood-Soblev inequality about fractional
integral is still an indispensable tool to establish time-space
estimates for the heat semigroup of nonlinear evolution equations,
for some of this work, see e.g. \cite{K}. In recent times, the
applications to Chaos and Fractal have become another motivation
to study fractional integrals, see e.g. \cite{KG} and \cite{LS}.
The boundedness of $I_{\alpha}$ on $M_{p,q}(\mathbb{R}^{n})$ was
first established by Adams in \cite{A}.

On the other hand, it is very important to study weighted estimates for these operators in harmonic analysis. It is well known that $M$ is a bounded operator on $L^{p}(w)$\cite{Mu} with
$w\in A_{p}$, $1<p<\infty$. For any non-negative locally functions
$w$ and any Lebesgue measurable function $f$, we set
$$\|f\|_{L^{p}(w)}=\left(\int_{\mathbb{R}^{n}}|f(x)|^{p}w(x) dx\right)^{1/p}$$
and if $w\equiv1$, we denote $\|f\|_{L^{p}(w)}$ simply by
$\|f\|_{L^{p}(\mathbb{R}^{n})}$. For the weighted $L^{p}(w)$
estimates and weighted weak (1.1) type estimates for $T$, see
\cite{G}. In \cite{MW1}, the authors obtained the corresponding
weighted boundedness on weighted $L^{p}$ spaces for $I_{\alpha}$
with $w\in A_{(p,q)}(1\leq p, q<\infty)$. Here and after,
$A_{p}(1\leq p<\infty)$ and $A_{(p,q)}(1\leq p, q<\infty)$ denote
the Muckenhoupt classes \cite{Mu}.

In \cite{KS}, Komori and Shirai introduced a weighted Morrey
space, which is a natural generalization of weighted Lebesgue
space, and investigated the boundedness of classical operators in
harmonic analysis, that is, the H-L maximal operator $M$, the C-Z
singular integral operator $T$ and the fractional integral
$I_{\alpha}$. Let $1\leq p<\infty,$ $0<\lambda<1$ and $w$ be a
function. Then the weighted Morrey space $M_{p,\lambda}(w)$ is
defined by
$$M_{p,\lambda}(w)=\left\{f: \|f\|_{M_{p,\lambda}(w)}=\sup_{B}\left(\frac{1}{w(B)^{\lambda}}\int_{B}|f(x)|^{p}w(x)dx\right)^{\frac{1}{p}}<\infty\right\}.$$
It is obviously that if $w=1, \lambda=1-\frac{p}{q}$, then
$M_{p,\lambda}(w)=M_{p,q}(\mathbb{R}^{n})$. For $w\in A_{p}(1\leq
p<\infty)$, if $\lambda=0,$ then $M_{p,0}(w)=L^{p}(w)$ and if
$\lambda=1, $ $M_{p,1}(w)=L^{\infty}(w)$.

In the fractional case, we need to consider a weighted Morrey
space with two weights which also introduced by Komori and Shirai
in \cite{KS}. Let $1\leq p<\infty$, $0<\lambda<1$. For two weights
$w_{1}$ and $w_{2}$,
$$
M_{p,\lambda}(w_{1},w_{2})=\left\{
f:\|f\|_{M_{p,k}(w_{1},w_{2})}=\sup_{B}\left(\frac{1}{w_{2}(B)^{\lambda}}\int_{B}|f(x)|^{p}w_{1}(x)dx\right)^{\frac{1}{p}}<\infty\right\}.
$$
If $w_{1}=w_{2}=w$, then we denote
$M_{p,\lambda}(w_{1},w_{1})=M_{p,\lambda}(w_{2},w_{2})=M_{p,\lambda}(w)$.

In \cite{W}, Wang obtained some estimates for Bochner-Riesz means
operators on $M_{p,\lambda}(w)$ by the similar method as in
\cite{KS}. In this paper, we shall establish some boundedness for
some sublinear operators on $M_{p,\lambda}(w)$ and
$M_{p,\lambda}(w_{1},w_{2})$, which includes, as particular cases,
the known results in \cite{KS} and \cite{W}. Applications to the strong solutions
of nondivergence elliptic equations with VMO coefficients are also
given.

Let $D_{k}=\{x\in \mathbb{R}^{n}:|x|\leq 2^{k}\}$ and $A_{k}=D_{k}/D_{k-1}$ for $k\in Z$. Let $\chi_{k}=\chi_{A_{k}}$ for $k\in Z$, where $\chi_{E}$ is the characteristic function of the set $E$. Our mean results are as follows:
\begin{theorem} \label{Theorem 1.1}
Suppose that a sublinear operator $\mathcal{T}$ satisfies the size conditions
\begin{equation}\label{(1.1)}
|\mathcal{T}f(x)|\leq C\|f\|_{L^{1}(\mathbb{R}^{n})}/|x|^{n},
\end{equation}
when $\supp f\subseteq A_{k}$ and $|x|\geq 2^{k+1}$ with $k\in Z$ and
\begin{equation}\label{(1.2)}
|\mathcal{T}f(x)|\leq C2^{-kn}\|f\|_{L^{1}(\mathbb{R}^{n})},
\end{equation}
when $\supp f\subseteq A_{k}$ and $|x|\leq 2^{k-1}$ with $k\in Z$. Then we have\\
$(a)$\,\,If $\mathcal{T}$ is bounded on $L^{p}(w)$ with $w\in A_{p}(1<p<\infty)$, then $\mathcal{T}$ is bounded on $M_{p,\lambda}(w)$.\\
$(b)$\,\,If $\mathcal{T}$ is bounded from $L^{1}(w)$ to $L^{1,\infty}(w)$ with $w\in A_{1}$, then there exist constant $C>0$ such that for all $\mu>0$ and all $B$,
$$
w(\{x\in B:\mathcal{T}f(x)>\mu\})\leq
{C}/{\mu}\|f\|_{M_{1,\lambda}(w)}w(B)^{\lambda}.
$$
\end{theorem}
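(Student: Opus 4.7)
The plan for both parts is the Calder\'on-type splitting $f=f\chi_{2B}+f\chi_{(2B)^c}$ for every ball $B$ appearing in the Morrey supremum. The local piece is immediate: the hypothesized $L^p(w)$-boundedness (part (a)) or weak $L^1(w)$-boundedness (part (b)), combined with the doubling inequality $w(2B)\le Cw(B)$ (automatic for $w\in A_p$ or $w\in A_1$), converts the relevant norm of $\mathcal{T}(f\chi_{2B})$ on $B$ into $\|f\|_{M_{p,\lambda}(w)} w(B)^{\lambda/p}$ (strong case) or $C\mu^{-1}\|f\|_{M_{1,\lambda}(w)} w(B)^\lambda$ (weak case). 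All the analytic content is in the global piece.

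For the global piece I would decompose dyadically, $f\chi_{(2B)^c}=\sum_k f\chi_{A_k\cap (2B)^c}$, and derive a pointwise bound for each $\mathcal{T}(f\chi_{A_k})(x)$ on $B$ via conditions (1.1)--(1.2). In the model case (the natural one in the applications, where by translation one may take $B\sim D_{k_0}$), every surviving index $k\ge k_0+2$ satisfies $|x|\le 2^{k-1}$ for all $x\in B$, so (1.2) gives $|\mathcal{T}(f\chi_{A_k})(x)|\le C\,2^{-kn}\|f\chi_{A_k}\|_{L^1}$. For part (a) the key step is converting this $L^1$-norm into the Morrey norm: H\"older's inequality together with the dual $A_p$-estimate $\int_{D_k}w^{1-p'}\le C|D_k|^{p'}w(D_k)^{-p'/p}$ yields $\|f\chi_{A_k}\|_{L^1}\le C|D_k|\,\|f\|_{M_{p,\lambda}(w)}\,w(D_k)^{(\lambda-1)/p}$, and the $|D_k|\sim 2^{kn}$ cancels the $2^{-kn}$ from (1.2), leaving the clean pointwise bound $C\|f\|_{M_{p,\lambda}(w)} w(D_k)^{(\lambda-1)/p}$ on $B$. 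Summing over $k\ge k_0+2$ converges geometrically by the $A_\infty$ reverse-doubling inequality $w(D_{k_0})/w(D_k)\le C\,2^{-n\delta(k-k_0)}$ (some $\delta>0$), since $(1-\lambda)/p>0$; the resulting pointwise estimate $|\mathcal{T}(f\chi_{(2B)^c})(x)|\le C\|f\|_{M_{p,\lambda}(w)}w(B)^{(\lambda-1)/p}$ integrates to the desired Morrey bound after dividing by $w(B)^\lambda$.

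For part (b) the outline is identical with two modifications: the H\"older/$A_p$ step is replaced by the pointwise $A_1$ lower bound $w(x)\ge c\,w(D_k)/|D_k|$ a.e.\ on $D_k$, which converts $\|f\chi_{A_k}\|_{L^1}$ directly into the $M_{1,\lambda}(w)$ norm; and the summed pointwise bound on $B$ is a constant of size $C\|f\|_{M_{1,\lambda}(w)}w(B)^{\lambda-1}$, so the bad set $\{x\in B:|\mathcal{T}(f\chi_{(2B)^c})(x)|>\mu/2\}$ is either empty (when $\mu/2$ exceeds the constant) or bounded trivially by $w(B)$, in either case producing the claimed $C\mu^{-1}\|f\|_{M_{1,\lambda}(w)} w(B)^\lambda$. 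The main technical obstacle I foresee is purely geometric: the annuli $A_k$ are origin-centered whereas $B$ is arbitrary, so in full generality the outer indices must be partitioned into those with $A_k\subset 2B$ (absorbed), those with $|x|\ge 2^{k+1}$ uniformly on $B$ (use (1.1)), and those with $|x|\le 2^{k-1}$ uniformly on $B$ (use (1.2)); one then reruns the H\"older/$A_\infty$ summation separately in each regime. For the translation-invariant operators in the applications this is painless, but verifying that the three groups partition the relevant indices (and that both (1.1) and (1.2) feed the same geometric series) is the step that requires the most care.
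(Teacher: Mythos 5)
Your proposal follows essentially the same route as the paper: the splitting $f=f\chi_{2B}+f\chi_{(2B)^{c}}$, the $L^{p}(w)$ (resp.\ weak $L^{1}(w)$) hypothesis for the local piece, and H\"older plus the dual $A_{p}$ estimate (resp.\ the pointwise $A_{1}$ bound) combined with doubling/reverse doubling to sum the tail geometrically. The only divergences are cosmetic: you are more explicit than the paper about reconciling the origin-centered annuli $A_{k}$ in (1.1)--(1.2) with an arbitrary ball $B$ (the paper silently applies the size condition on the annuli $2^{k+1}B\setminus 2^{k}B$ centered at $B$), and in part (b) you conclude via the ``either the bad set is empty or bound it by $w(B)$'' dichotomy where the paper uses a Chebyshev-type integration of the same pointwise bound.
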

It is easy to check that the H-L maximal function $M(f)$ satisfies
the hypotheses of Theorem \ref{Theorem 1.1}.

We say $b$ is a $BMO$ function, which means
$\|b\|_{BMO}=\|b^{\sharp}\|_{L^{\infty}}<\infty$, where
$b^{\sharp}(x)$ is sharp maximal function
$$b^{\sharp}(x)=\sup_{ B}\frac{1}{|B|}\int_{B}\left|f(y)-f_{B}\right|dy,$$
where the supreme is taken over all balls $B\subset \mathbb{R}^{n}$ and $f_{B}=\frac{1}{|B|}\int_{B}f(y)dy.$
For $1<p<\infty$, there is a close relation between $BMO$ and
$A_{p}$ weights
$$BMO=\left\{\alpha \log w: w\in A_{p}, \alpha\geq 0\right\}.$$

Given a operator $N$ acting on functions and given a function $b$,
the commutator $[b,N]$ is formally defined as
$$N_{b}f=[b,N]f=bN(f)-N(bf).$$
There is a great amount of works that deal with the topic of
commutators of different operators with $BMO$ functions on
Lesbugue spaces. The first results on this commutator were
obtained by Coifman, Rochberg and Weiss \cite{CRW} in their study
of certain factorization theorems for generalized Hardy spaces.
They show that $N_{b}f$ is bounded on $L^{p}(\mathbb{R}^{n}),
1<p<\infty$, if and only if $b\in BMO$ when $N$ is a classical
singular integral operator with smooth kernel. For some classical
weighted boundedness of $N_{b}$ on $L^{p}(w)$ with $w\in A_{p},
1<p<\infty$, see e.g. \cite{C1}, \cite{C2}. It is well known that
the commutators formed by $BMO$ functions and the fractional
integral $I_{\alpha}$, the C-Z singular integral $T$ are all
bounded on weighted $L^{p}$ spaces \cite{ST}. In \cite{KS}, the
authors also established the weighted boundedness for $T_{b}$ and
$I_{\alpha,b}$ on weighted Morrey spaces. In this paper, we extend
the results of \cite{KS} and obtain

\begin{theorem} \label{Theorem 1.2}
Let $1<p<\infty$, $w\in A_{p}$ and a sublinear operator
$\mathcal{\overline{T}}$ satisfies the conditions
\begin{equation}\label{(1.3)}
|\mathcal{\overline{T}}f(x)|\leq
C\int_{\mathbb{R}^{n}}\frac{|f(y)|}{|x-y|^{n}}dy, \,\,\,x\notin
\emph{supp} f
\end{equation}
for any integral function $f$ with compact support. Then we have\\
$(a)$\,\,If $\mathcal{\overline{T}}$ is bounded on $L^{p}(w)$, then $\mathcal{\overline{T}}$ is bounded on $M_{p,\lambda}(w)$.\\
$(b)$\,\,If $\mathcal{\overline{T}}_{b}$ is bounded $L^{p}(w)$
with $b\in BMO(\mathbb{R}^{n})$, then $\mathcal{\overline{T}}_{b}$
is bounded on $M_{p,\lambda}(w)$
\end{theorem}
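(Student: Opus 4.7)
Both parts follow the same local-plus-tail decomposition with respect to a fixed ball $B = B(x_0, r)$. Set $f_1 = f\chi_{2B}$ and $f_2 = f - f_1$. The local pieces $\mathcal{\overline{T}} f_1$ and $\mathcal{\overline{T}}_b f_1$ are controlled directly by the $L^p(w)$ hypothesis and the doubling property $w(2B)\leq Cw(B)$ of $A_p$ weights:
$$
\Big(\int_B |\mathcal{\overline{T}} f_1(x)|^p w(x)\,dx\Big)^{1/p}
\leq C\|f_1\|_{L^p(w)}
\leq C\, w(2B)^{\lambda/p}\|f\|_{M_{p,\lambda}(w)}
\leq C\, w(B)^{\lambda/p}\|f\|_{M_{p,\lambda}(w)},
$$
with the same estimate when $\mathcal{\overline{T}}$ is replaced by $\mathcal{\overline{T}}_b$.

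For the tail piece in part (a), decompose $(2B)^c = \bigcup_{k\geq 1}(2^{k+1}B\setminus 2^k B)$ and invoke (1.3) to obtain, for $x\in B$,
$$
|\mathcal{\overline{T}} f_2(x)| \leq C\sum_{k\geq 1} \frac{1}{|2^{k+1}B|}\int_{2^{k+1}B} |f(y)|\,dy.
$$
H\"older's inequality applied with weight $w^{1/p}\cdot w^{-1/p}$ on each annulus, the $A_p$ identity $\big(\int_{2^{k+1}B} w(y)^{1-p'}\,dy\big)^{1/p'} \leq C|2^{k+1}B|w(2^{k+1}B)^{-1/p}$, and the Morrey bound $\int_{2^{k+1}B}|f|^p w \leq \|f\|_{M_{p,\lambda}(w)}^p w(2^{k+1}B)^\lambda$ together yield $|\mathcal{\overline{T}} f_2(x)| \leq C\|f\|_{M_{p,\lambda}(w)}\sum_{k\geq 1} w(2^{k+1}B)^{(\lambda-1)/p}$. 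Since $w\in A_p\subset A_\infty$ supplies a power-type gap $w(B)/w(2^k B)\leq C 2^{-k\delta}$ for some $\delta>0$, and $\lambda<1$, the series converges and produces $C w(B)^{(\lambda-1)/p}$; multiplying by $w(B)^{1/p}$ and combining with the local estimate finishes (a).

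For (b), apply the commutator identity
$$
\mathcal{\overline{T}}_b f_2(x) = (b(x) - b_B)\,\mathcal{\overline{T}} f_2(x) - \mathcal{\overline{T}}\big((b - b_B)f_2\big)(x),
$$
and estimate each piece via (1.3). The first factors as $|b(x)-b_B|$ times the scalar already controlled in (a); integrating the $p$-th power against $w$ over $B$ produces an extra factor $\big(\int_B |b-b_B|^p w\big)^{1/p}\leq C\|b\|_{BMO}w(B)^{1/p}$, a standard consequence of John--Nirenberg combined with $w\in A_p$. The second piece is treated by repeating the annular argument with an extra $|b(y)-b_B|$ inside the integral; splitting $b-b_B = (b - b_{2^{k+1}B}) + (b_{2^{k+1}B} - b_B)$ and using $|b_{2^{k+1}B} - b_B|\leq C k\|b\|_{BMO}$ inserts only a polynomial-in-$k$ factor into each summand. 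The main obstacle throughout is summability of the dyadic tail: convergence requires the $A_\infty$ power gap $w(B)/w(2^k B)\leq C 2^{-k\delta}$ and $\lambda<1$ to yield a geometric series, and in (b) this decay must additionally dominate the polynomial-in-$k$ factor coming from BMO oscillation across the dyadic annuli.
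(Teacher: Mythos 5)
Your proposal is correct and follows essentially the same route as the paper: the same $f=f\chi_{2B}+f\chi_{(2B)^c}$ splitting, annular decomposition of the tail with H\"older and the $A_p$ condition, and geometric summability from the $A_\infty$ (reverse doubling) gap. The only cosmetic difference is in part (b): you use the algebraic identity $\mathcal{\overline{T}}_b f_2=(b-b_B)\mathcal{\overline{T}}f_2-\mathcal{\overline{T}}((b-b_B)f_2)$ with the unweighted mean $b_B$, whereas the paper splits $|b(x)-b(y)|$ around the weighted mean $b_{B,w}$ and packages the second piece as Lemma \ref{lemma 3.4}; note that your ``repeat the annular argument'' step implicitly needs the weighted John--Nirenberg bound for $\int_{2^{k+1}B}|b-b_{2^{k+1}B}|^{p'}w^{1-p'}$ with the dual weight $w^{1-p'}\in A_{p'}$, which is exactly the role of Lemma \ref{lemma 3.3} in the paper.
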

It is worth pointing out that (\ref{(1.3)}), which implies the
size conditions in Theorem \ref{Theorem 1.1}, is satisfied by many
operators in harmonic analysis, such as C-Z singular integral
operator, the Carleson maximal operator, C. Fefferman's singular
multiplier operator, R. Fefferman's singular integral operator and so on, see e.g.
\cite{RS} and \cite{SW}.

Besides the H-L maximal operators and C-Z singular integral
operators, oscillatory integral operators have been an essential
part of harmonic analysis; three chapters are devoted to them in
the celebrated Stein's book \cite{St}. Many important operators in
harmonic analysis are some versions of oscillatory integrals, such
 as the Fourier transform, the Bochner-Riesz means, the Radon
transform \cite{PS} in CT technology and so on. For a more
complete account on oscillatory integrals in classical harmonic
analysis, we would like to refer the interested reader to
\cite{L1}, \cite{L2}, \cite{LDY} and references
therein. Another early impetus for the study of oscillatory
integrals came with their application to number theory \cite{Bou}.
In more recent times, the operators fashioned from oscillatory
integrals, such as pseudo-differential operator in PDE become
another motivation to study them. Based on the estimates of some
kinds of oscillatory integrals, one can establish the
well-posedness theory of a class of dispersive equations, for some
of this works, we refer to \cite{CM}, \cite{KPV1} and \cite{KPV2}.

In 1987, Ricci and Stein \cite{RS} introduced one class of oscillatory integrals
$$T_{O}f(x)=\mathrm{p.v.}\int_{\mathbb{R}^{n}}e^{iP(x,y)}K(x-y)f(y)\,dy,$$
which initially defined for smooth function $f$ with compact
support. Here $P(x,y)$ is a real valued polynomial defined on
$\mathbb{R}^{n}\times\mathbb{R}^{n}$, and $K\in
C^{1}(\mathbb{R}^{n}\setminus\{0\})$ is a C-Z kernel. In
\cite{RS}, Ricci and Stein established the boundedness of $T_{O}$
on $L^{p}(\mathbb{R}^{n})(1<p<\infty)$. In 1992, Lu and Zhang
\cite{LZ} gave the boundedness of $T_{O}$ on
$L^{p}(w)(1<p<\infty)$ with $w\in A_{p}$. For the case $p=1$,
Chanillo and Christ \cite{CC} gave a weak (1,1) type estimates
while Sato obtained the weighted version of \cite{CC} in
\cite{Sat}. It is easy to see that $T_{O}$ satisfies Theorem
\ref{Theorem 1.2}.

The Bochner-Riesz mean operators of order $\delta>0$ in $\mathbb{R}^{n}(n\geq 2)$ are defined initially for Schwartz functions in terms of Fourier transforms by
$$
(T_{R}^{\delta}f)^{\wedge}(\xi)=(1-\frac{|\xi|^{2}}{R^{2}})_{+}^{\delta}\hat{f}(\xi),
$$
where $\hat{f}$ denotes the Fourier transform of $f$. These operators were first introduced by Bochner \cite{Boc} in connection with summation of multiple Fourier series and played an important role in harmonic analysis. $T_{R}^{\delta}$ can be expressed as convolution operators $T_{R}^{\delta}f(x)=(f\ast\phi_{\frac{1}{R}})(x)$, where $\phi(x)=[(1+|\cdot|^{2})_{+}^{\delta}]^{\wedge}(x)$. It is well know that the kernel $\phi$ can be represented as \cite{LW}:
$$\phi(x)=\pi^{-\delta}\Gamma(\delta+1)|x|^{-(n/2+\delta)}J_{n/2+\delta}(2\pi|x|),$$
where $J_{\mu}(t)$ is the Bessel function. In \cite{SS}, Shi and
Sun obtained the weighted $L^{p}$ boundedness of $T_{R}^{\delta}$
while Vargas given the weighted weak (1,1) type estimates in
\cite{V}. By the well known boundedness criterion for the
commutators of linear operators, which was obtained by Alvarez,
Bagby, Kurtz and P\'{e}rez \cite{ABKP}, we see that the commutator
$T_{R,b}^{\delta}$ is also bounded on $L^{p}(w)$ with $w\in A_{p}$
for all $1<p<\infty.$ From the asymptotic properties of the Bessel
function, we can deduce that when $\delta=(n-1)/2$, the critical
index, $|\phi(x)|\leq \frac{C}{(1+|x|)^{n}}$, which implies that
$T_{R}^{\delta}$ satisfies the condition of Theorem \ref{Theorem
1.2}.

Given a positive real number $0<a\neq 1$, the oscillating kernel $K_{a}$ is defined by \cite{CKS2}:
$$
K_{a}(x)=(1+|x|)^{-1}e^{i|x|^{a}}.
$$
The convolution operator $T_{a}=K_{a}\ast f$ and closely related weakly singular operators and multiplier operators have been studied by many authors, see e.g. \cite{CKS1}, \cite{DNS} and \cite{JS}. In \cite{CKS2}, Chanillo, Kurtz and Sampson had given the weighted weak (1,1) type estimates and weighted $L^{p}(1<p<\infty)$ estimates for $T_{a}$. It is easy to see that $T_{a}$ satisfies Theorem \ref{Theorem 1.2}.

\begin{theorem} \label{Theorem 1.3}
Let $0<\alpha<n$, $1<p<\frac{n}{\alpha}$ ,$\frac{1}{q}=\frac{1}{p}-\frac{\alpha}{n}$ and $1<p\leq q\leq\infty$. Suppose that a sublinear operator $\mathcal{T}_{\alpha}$ satisfies the size conditions
\begin{equation}\label{(1.4)}
|\mathcal{T}_{\alpha}f(x)|\leq C|x|^{-(n-\alpha)}\|f\|_{L^{1}(\mathbb{R}^{n})}
\end{equation}
when $\supp f\subseteq A_{k}$ and $|x|\geq 2^{k+1}$ with $k\in Z$ and
\begin{equation}\label{(1.5)}
|\mathcal{T}_{\alpha}f(x)|\leq C2^{-k(n-\alpha)}\|f\|_{L^{1}(\mathbb{R}^{n})}
\end{equation}
when $\supp f\subseteq A_{k}$ and $|x|\leq 2^{k-1}$ with $k\in Z$. Then we have\\
$(a)$\,\,If $\mathcal{T}_{\alpha}$ maps $L^{p}(w^{p})$ into $L^{q}(w^{q})$ with $w\in A_{(p,q)}$, then $\mathcal{T}_{\alpha}$ is bounded from  $M_{p,\lambda}(w^{p},w^{q})$ to $M_{q,q\lambda/p}(w^{q})$.\\
$(b)$\,\,If $\mathcal{T}_{\alpha}$ is bounded from $L^{1}(w)$ to
$L^{q,\infty}(w^{q})$ with $w\in A_{(1,q)}$, then there exist
constant $C>0$ such that for all $\mu>0$ and all ball $B$,
$$
w^{q}(\{x\in B:\mathcal{T}_{\alpha}f(x)>\mu\})\leq
{C}/{\mu^{q}}\|f\|_{M_{1,\lambda}(w,w^{q})}^{q}w^{q}(B)^{q\lambda}.
$$
\end{theorem}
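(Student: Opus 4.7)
I would mirror the Komori--Shirai proof for the fractional integral $I_\alpha$, replacing its explicit pointwise kernel bound by the abstract size conditions \eqref{(1.4)} and \eqref{(1.5)}. Fix an arbitrary ball $B=B(x_0,r)$ and split $f=f_1+f_2$ with $f_1=f\chi_{2B}$ and $f_2=f\chi_{\R^n\setminus 2B}$. The goal is to show, uniformly in $B$,
$$\Bigl(\int_B|\mathcal{T}_\alpha f|^qw^q\,dx\Bigr)^{1/q}\le C\,w^q(B)^{\lambda/p}\|f\|_{M_{p,\lambda}(w^p,w^q)},$$
since this is exactly the $M_{q,q\lambda/p}(w^q)$ bound, and to handle the two pieces separately.

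For the local piece $f_1$ the assumed $L^p(w^p)\to L^q(w^q)$ boundedness yields immediately
$$\Bigl(\int_B|\mathcal{T}_\alpha f_1|^qw^q\Bigr)^{1/q}\le C\|f_1\|_{L^p(w^p)}\le C\|f\|_{M_{p,\lambda}(w^p,w^q)}\,w^q(2B)^{\lambda/p},$$
and since $w\in A_{(p,q)}$ forces $w^q\in A_\infty$, hence doubling, $w^q(2B)\le Cw^q(B)$ and this piece is done.

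The substantive work is on $f_2$. Decompose $\R^n\setminus 2B=\bigcup_{j\ge 1}(2^{j+1}B\setminus 2^jB)$; treating each dyadic annulus in the role of $A_k$ (with $x_0$ in place of the origin, which is automatic for the translation-invariant operators in the applications), the hypotheses \eqref{(1.4)} and \eqref{(1.5)} give, for $x\in B$,
$$|\mathcal{T}_\alpha f_2(x)|\le C\sum_{j\ge 1}(2^jr)^{-(n-\alpha)}\int_{2^{j+1}B}|f(y)|\,dy.$$
Write $|f|=(|f|w)\cdot w^{-1}$, apply H\"older with exponents $p,p'$, and use the $A_{(p,q)}$ condition rearranged as $\bigl(\int_{2^{j+1}B}w^{-p'}\bigr)^{1/p'}\le C|2^{j+1}B|^{1-\alpha/n}w^q(2^{j+1}B)^{-1/q}$. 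The remaining factor is bounded by $\|f\|_{M_{p,\lambda}(w^p,w^q)}w^q(2^{j+1}B)^{\lambda/p}$; the powers of $2^jr$ cancel against $(2^jr)^{-(n-\alpha)}$, giving
$$|\mathcal{T}_\alpha f_2(x)|\le C\|f\|_{M_{p,\lambda}(w^p,w^q)}\sum_{j\ge 1}w^q(2^{j+1}B)^{\lambda/p-1/q}.$$
In the parameter regime $\lambda/p<1/q$ the reverse-doubling $w^q(2^{j+1}B)\ge c\,2^{jn\delta}w^q(B)$ for $w^q\in A_\infty$ makes this sum geometric with value $\le Cw^q(B)^{\lambda/p-1/q}$; integrating over $B$ produces the target bound.

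Part (b) follows the same split. On $f_1$ I would apply the weak-type $L^1(w)\to L^{q,\infty}(w^q)$ hypothesis together with $\|f_1\|_{L^1(w)}\le C\|f\|_{M_{1,\lambda}(w,w^q)}w^q(B)^{\lambda}$ (from doubling). For $f_2$, the equivalence $w\in A_{(1,q)}\iff w^q\in A_1$ gives $\|w^{-1}\|_{L^\infty(2^{j+1}B)}\le C|2^{j+1}B|\,w^q(2^{j+1}B)^{-1/q}$; the same annular sum then bounds $|\mathcal{T}_\alpha f_2|$ pointwise on $B$, and Chebyshev closes the estimate. The principal obstacle I foresee is the geometric summation at the end of the $f_2$ analysis: one needs a quantitative $A_\infty$ reverse-doubling exponent for $w^q$ and a careful alignment of the origin-centered size conditions with the ball-centered annular decomposition (transparent when $\mathcal{T}_\alpha$ is translation invariant, as in every operator of interest here). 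Once these are in hand, part (b) requires only minor adjustments.
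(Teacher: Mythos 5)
Your proposal follows essentially the same route as the paper's proof: the $f=f_1+f_2$ splitting, the $L^p(w^p)\to L^q(w^q)$ hypothesis plus doubling for the local piece, and the annular decomposition with H\"older, the $A_{(p,q)}$ condition, and reverse doubling to sum the geometric series for the far piece (the paper likewise leaves part (b) as a routine modification). Your remarks on the implicit constraint $\lambda/p<1/q$ and on aligning the origin-centered size conditions with the ball-centered annuli are points the paper glosses over, but they do not change the argument.
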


The fractional maximal operator $M_{\alpha}$ is defined by
$$
M_{\alpha}f(x)=\sup_{B\ni x}\frac{1}{|B|^{1-\alpha/n}}\int_{B}|f(y)|dy, \,\,\,\,0<\alpha<n.
$$
$M_{\alpha}$ satisfies the hypotheses of Theorem \ref{Theorem 1.3}
since the pointwise inequality $M_{\alpha}f(x)\leq
I_{\alpha}(|f|)(x)$ for $0<\alpha<n$.

\begin{theorem} \label{Theorem 1.4}
Let $p,q,\alpha,w$ be as the same as that of Theorem $\ref{Theorem
1.3}$ and a sublinear operator $\mathcal{\overline{T}}_{\alpha}$
satisfies the conditions
\begin{equation}\label{(1.6)}
|\mathcal{T}_{\alpha}f(x)|\leq
C\int_{\mathbb{R}^{n}}\frac{|f(y)|}{|x-y|^{n-\alpha}}dy,
\,\,\,x\notin \emph{supp} f
\end{equation}
for any integral function $f$ with compact support.\\
$(a)$\,\,If $\mathcal{\overline{T}}_{\alpha}$ maps $L^{p}(w^{p})$ into $L^{q}(w^{q})$, then $\mathcal{\overline{T}}_{\alpha}$ is bounded from $M_{p,\lambda}(w^{p},w^{q})$ to $M_{q,q\lambda/p}(w^{q})$.\\
$(b)$\,\,If $\mathcal{\overline{T}}_{\alpha,b}$ maps
$L^{p}(w^{p})$ into $L^{q}(w^{q})$ with $b\in
BMO(\mathbb{R}^{n})$, then $\mathcal{\overline{T}}_{\alpha,b}$ is
bounded from $M_{p,\lambda}(w^{p},w^{q})$ to
$M_{q,q\lambda/p}(w^{q})$.
\end{theorem}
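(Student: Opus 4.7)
The plan is to deduce part (a) from Theorem~\ref{Theorem 1.3}(a) by checking its size hypotheses from~(\ref{(1.6)}), and to prove part (b) by the standard inside/outside ball decomposition for commutators.

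For part (a), observe that~(\ref{(1.6)}) immediately implies the annular size conditions~(\ref{(1.4)}) and~(\ref{(1.5)}) of Theorem~\ref{Theorem 1.3}: if $\supp f\subseteq A_k$ and $|x|\ge 2^{k+1}$, then every $y\in A_k$ satisfies $|y|\le 2^k\le |x|/2$, so $|x-y|\ge |x|/2$ and~(\ref{(1.6)}) yields $|\overline{\mathcal T}_\alpha f(x)|\le C|x|^{-(n-\alpha)}\|f\|_{L^1(\mathbb{R}^n)}$; the case $|x|\le 2^{k-1}$ for~(\ref{(1.5)}) is handled analogously by separating $x$ from $\supp f$. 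Applying Theorem~\ref{Theorem 1.3}(a) then gives (a).

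For part (b), fix a ball $B=B(x_0,r_B)$ and split $f=f_1+f_2$ with $f_1=f\chi_{2B}$, $f_2=f\chi_{(2B)^c}$. The local term $\overline{\mathcal T}_{\alpha,b}f_1$ is controlled directly by the assumed $L^p(w^p)\to L^q(w^q)$ bound for $\overline{\mathcal T}_{\alpha,b}$, the definition of $M_{p,\lambda}(w^p,w^q)$, and the doubling of $w^q$ (which lies in $A_\infty$ since $w\in A_{(p,q)}$). For the non-local term I would use the identity
\[
\overline{\mathcal T}_{\alpha,b}f_2(x)=(b(x)-b_B)\overline{\mathcal T}_\alpha f_2(x)-\overline{\mathcal T}_\alpha\bigl((b-b_B)f_2\bigr)(x),
\]
insert the pointwise bound~(\ref{(1.6)}), decompose $(2B)^c$ into dyadic annuli $2^{k+1}B\setminus 2^k B$ on which $|x-y|\sim 2^k r_B$ for $x\in B$, and apply H\"older's inequality with the $A_{(p,q)}$ estimate
\[
\Bigl(\int_{2^{k+1}B}w^q\Bigr)^{1/q}\Bigl(\int_{2^{k+1}B}w^{-p'}\Bigr)^{1/p'}\le C|2^{k+1}B|^{1-\alpha/n}.
\]
The factor $|b(y)-b_B|$ appearing in the second piece is split as $|b(y)-b_{2^{k+1}B}|+|b_{2^{k+1}B}-b_B|$, the latter bounded by $Ck\|b\|_{BMO}$ and the former absorbed through John--Nirenberg; the outer $b(x)-b_B$ factor is then handled using the weighted BMO inequality $(\int_B|b-b_B|^q w^q)^{1/q}\le C\|b\|_{BMO}\,w^q(B)^{1/q}$, valid for $w^q\in A_\infty$.

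The main obstacle is the convergence of the resulting dyadic series of the form $\sum_{k\ge 1} k\, w^q(2^{k+1}B)^{\lambda/p-1/q}$ that is introduced by the commutator structure. For the target space $M_{q,q\lambda/p}(w^q)$ to be non-trivial one needs $q\lambda/p<1$, so $\lambda/p-1/q<0$; combined with the reverse doubling of the $A_\infty$ weight $w^q$, which gives geometric growth of $w^q(2^{k+1}B)/w^q(B)$ in $k$, this dominates the polynomial factor $k$ and forces convergence. Collecting the local and non-local contributions and taking the supremum over all balls $B$ then yields the desired $M_{p,\lambda}(w^p,w^q)\to M_{q,q\lambda/p}(w^q)$ bound, completing (b).
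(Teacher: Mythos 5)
Your proposal is correct and follows essentially the same route as the paper, which disposes of Theorem \ref{Theorem 1.4} in one line by saying it is proved like Theorem \ref{Theorem 1.2} with the $A_{(p,q)}$ condition replacing $A_p$: part (a) reduces to Theorem \ref{Theorem 1.3} via the size conditions, and part (b) repeats the commutator argument (local term by the assumed $L^p(w^p)\to L^q(w^q)$ bound, non-local term by dyadic annuli, H\"older with $A_{(p,q)}$, John--Nirenberg, and reverse doubling to sum the series). The only cosmetic differences are that you subtract the unweighted mean $b_B$ via the operator identity for $[b,\overline{\mathcal T}_\alpha]$, whereas the paper works with the weighted mean $b_{B,w}$ and splits $|b(x)-b(y)|$ pointwise inside the kernel bound; both versions close in the same way.
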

We remarks that fractional integral $I_{\alpha}$ and oscillatory
fractional integral of Ricci and Stein's \cite{RS} are all
examples of operators which satisfies (\ref{(1.6)}). For the
corresponding boundedness in unweighted cases of the sublinear
operators on Herz space, we refer to \cite{HY} and \cite{LY}.

We end this section with the outline of this paper. In Section \ref{section2}, we give
the proofs of Theorem \ref{Theorem 1.1}-Theorem \ref{Theorem 1.4}. Section
\ref{section3} contains some applications of Theorem \ref{Theorem
1.1}-Theorem \ref{Theorem 1.4}.
Throughout this paper, the letter $C$ is used for various
constants, and may change from one occurrence to another. All
balls are assumed to have their sides paralled to the coordinate
axes. $B=B(x_{0},r)$ denotes the ball centered at $x_{0}$ and with
radius $r$ and $\lambda B=B(x_{0},\lambda r)$.

\section{Proofs of the main results}\label{section2}
Our methods are adopted from \cite{FLY} in the case of the
Lebesgue measure and from \cite{KS} dealing with the classical
operators. Before the proof of Theorem \ref{Theorem 1.1}, we give
some properties of $A_{p}$ weights.
\begin{lemma} \,\,\rm{\cite{G}}\label{lemma 3.1}
Let $1\leq p<\infty$, and $w\in A_{p}$. Then the following
statements are true

$\mathrm{(a)}$\,\, There exists a constant $C$ such that
\begin{equation}\label{3.1}
w(2B)\leq Cw(B).
\end{equation}
When $w$ satisfies this condition,
we say $w$ satisfies doubling condition.

$\mathrm{(b)}$\,\,  There exists a constant $C>1$ such that
\begin{equation}\label{3.2}
w(2B)\geq Cw(B).
\end{equation}
When $w$ satisfies this condition,
we say $w$ satisfies reverse doubling condition.

$\mathrm{(c)}$\,\,  There exist two constant $C$ and $r>1$ such
that the following reverse H\"{o}lder inequality holds for every
ball $B\subset \mathbb{R}^{n}$
\begin{equation}\label{3.3}
\left(\frac{1}{|B|}\int_{B}w(x)^{r}dx \right)^{\frac{1}{r}}\leq C\left(\frac{1}{|B|}\int_{B} w(x)dx\right).
\end{equation}

$\mathrm{(d)}$\,\,  For all $\lambda>1,$ we have
\begin{equation}\label{3.4}
w(\lambda B)\leq C\lambda^{np}w(B).
\end{equation}

$\mathrm{(e)}$\,\,  There exist two constant $C$ and $\delta>0$
such that for any measurable set $Q\subset B$
\begin{equation}\label{3.5}
\frac{w(Q)}{w(B)}\leq C\left( \frac{|Q|}{|B|}\right)^{\delta}.
\end{equation}
If $w$ satisfies (\ref{3.5}), we say $w\in A_{\infty}$.
\end{lemma}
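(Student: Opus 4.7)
The plan is to establish all five properties starting from the $A_p$ defining inequality, namely
$$\left(\frac{1}{|B|}\int_B w(x)\,dx\right)\left(\frac{1}{|B|}\int_B w(x)^{-1/(p-1)}\,dx\right)^{p-1} \leq C_{w},$$
with the reverse H\"older inequality (c) playing the central role. I would prove (a) and (d) directly from the definition, then (c) by a Calder\'on--Zygmund stopping-time decomposition, and finally deduce (e) and (b) from (c).

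For (a) and (d), the $A_p$ inequality combined with H\"older's inequality yields the dual quantitative estimate
$$\frac{|E|}{|B|} \leq C\left(\frac{w(E)}{w(B)}\right)^{1/p}$$
valid for every ball $B$ and every measurable $E \subset B$. Applying this with $E = B$ sitting inside $\lambda B$ gives $\lambda^{-n} \leq C(w(B)/w(\lambda B))^{1/p}$, which rearranges to $w(\lambda B) \leq C^p \lambda^{np} w(B)$. This proves (d), and specializing to $\lambda = 2$ gives (a). The case $p=1$ is handled analogously using the pointwise characterization $Mw \leq C w$.

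For (c), I would run a Calder\'on--Zygmund stopping-time decomposition inside each ball $B$ at a suitably chosen height $\beta \bar w_B$, where $\bar w_B = \frac{1}{|B|}\int_B w$, and iterate it to produce nested families of sub-cubes on which the averages of $w$ grow geometrically. The $A_p$ hypothesis is invoked to show that the total measure of the $k$-th generation decays like $\rho^k |B|$ for some $\rho \in (0,1)$ depending on $\beta$; integrating the resulting level-set bound yields $\frac{1}{|B|}\int_B w^r \leq C (\bar w_B)^r$ for some $r > 1$. This is the step I expect to be the main technical obstacle: choosing $\beta$ large enough to dominate the doubling constant while keeping the iteration contractive requires a delicate interplay with the $A_p$ constant, and the passage from level-set decay to an $L^r$ bound forces an attentive choice of $r$.

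For (e) and (b), once (c) is in hand, (e) follows from H\"older applied on $Q \subset B$:
$$w(Q) \leq |Q|^{1-1/r}\left(\int_B w^r\right)^{1/r} \leq C\,|Q|^{1-1/r}|B|^{1/r-1}w(B) = C\left(\frac{|Q|}{|B|}\right)^{1-1/r} w(B),$$
so (e) holds with $\delta = 1 - 1/r$. Finally, (b) is obtained by applying the equivalent lower-bound form of the $A_\infty$ condition (a standard consequence of (e)) to the complement $E = 2B \setminus B$ inside $2B$: since $|E|/|2B| = 1 - 2^{-n}$ is bounded below away from zero, one obtains $w(E) \geq c\, w(2B)$ for some $c \in (0,1)$, so $w(2B) = w(B) + w(E) \geq w(B) + c\, w(2B)$, which rearranges to $w(2B) \geq (1-c)^{-1} w(B)$ with $(1-c)^{-1} > 1$.
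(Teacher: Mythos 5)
The paper offers no proof of this lemma at all: it is quoted from Grafakos's book \cite{G} as a package of standard $A_p$ facts, so there is no argument of the authors' to compare yours against. Your sketch follows the classical textbook route, and most of it is sound: the H\"older-duality estimate $\frac{|E|}{|B|}\leq C\bigl(\frac{w(E)}{w(B)}\bigr)^{1/p}$ for $E\subset B$ does follow from the $A_p$ inequality exactly as you say, and it yields (d), hence (a); your deduction of (e) from (c) with $\delta=1-1/r$ is correct.

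Two points need repair. First, in the reverse H\"older step, the decay that must be extracted from the $A_p$ hypothesis is the decay of the \emph{weighted} measure of the generations, $w(\Omega_{k+1})\leq\beta\,w(\Omega_k)$, not the Lebesgue measure: the Lebesgue bound $|\Omega_{k+1}\cap Q|\leq(2^n\lambda_k/\lambda_{k+1})|Q|$ is automatic from the stopping rule and is not by itself enough, since the levels grow like $(2^n/\alpha)^k$ and the sum $\sum_k\lambda_{k+1}^{1+\epsilon}|\Omega_k|$ then diverges for every admissible $\alpha<1$. One passes from the Lebesgue decay to the weighted decay using precisely the $A_p$-derived implication ``$|S|\leq\alpha|Q|\Rightarrow w(S)\leq\beta w(Q)$'' (which follows from your H\"older estimate applied to $Q\setminus S$), and then sums $\int_{\Omega_k\setminus\Omega_{k+1}}w^{1+\epsilon}\leq\lambda_{k+1}^{\epsilon}w(\Omega_k)$. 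Second, your derivation of (b) from (e) does not close: applying (e) to the complement $B\subset 2B$ only gives $w(B)\leq C2^{-n\delta}w(2B)$, and nothing forces $C2^{-n\delta}<1$; the ``lower-bound form'' of $A_\infty$ for a set of \emph{fixed} (not near-$1$) Lebesgue proportion is not an immediate consequence of (e). The correct and shorter route is the estimate you already proved for (a) and (d): taking $E=2B\setminus B\subset 2B$ gives $1-2^{-n}\leq C\bigl(w(2B\setminus B)/w(2B)\bigr)^{1/p}$, hence $w(2B)=w(B)+w(2B\setminus B)\geq\bigl(1+((1-2^{-n})/C)^p\bigr)w(B)$, which is (b) with an explicit constant exceeding $1$.
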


\subsection{Proof of Theorem \ref{Theorem 1.1}}\label{section3.1}
Let $1<p<\infty$, $w\in A_{p}$ and $0<\lambda<1$.  We first give
the proof of (a), which
 suffices to show that
\begin{equation}\label{3.6}
\frac{1}{w(B)^{\lambda}}\int_{B}|\mathcal{T}f(x)|^{p}w(x)dx\leq
C\|f\|_{M_{p,\lambda}(w)}^{p}.
\end{equation}
For a fixed ball $B=B(x_{0},r)$, there is no loss of generality in
assuming $r=1$. We decompose
$f=f\chi_{2B}+f\chi_{(2B)^{c}}:=f_{1}+f_{2}$. Since $\mathcal{T}$
is a sublinear operator, so we get
$$
\frac{1}{w(B)^{\lambda}}\int_{B}|\mathcal{T}f(x)|^{p}w(x)dx\leq
\frac{1}{w(B)^{\lambda}}\int_{B}(|\mathcal{T}f_{1}(x)|^{p}+|\mathcal{T}f_{2}(x)|^{p})w(x)dx:=I+II.
$$
Using the fact that $\mathcal{T}$  is bounded on $L^{p}(w)$, we can easily get
\begin{equation}\label{3.7}
I\leq
\frac{1}{w(B)^{\lambda}}\int_{\mathbb{R}^{n}}|\mathcal{T}f_{1}(x)|^{p}w(x)dx\leq
\frac{C}{w(B)^{\lambda}}\int_{2B}|f(x)|^{p}w(x)dx \leq
C\|f\|_{M_{p,\lambda}(w)}^{p}.
\end{equation}

We are now in a position to estimate the term $II$. We  conclude from $w\in A_{p}$ that
\begin{align*}
\int_{(2B)^{c}}|f(y)|dy
&\leq C\sum_{k=1}^{\infty}\int_{2^{k+1}B/2^{k}B}|f(y)|dy\\
&\leq
C\sum_{k=1}^{\infty}\left(\int_{2^{k+1}B}|f(y)|^{p}w(y)dy\right)^{1/p}\left(\int_{2^{k+1}B}w(y)^{-p'/p}dy\right)^{1/p'}\\
&\leq C\|f\|_{M_{p,\lambda}(w)}\sum_{k=1}^{\infty}\frac{|2^{k+1}B|}{w(2^{k+1}B)^{1-\lambda/p}}.
\end{align*}
By (\ref{(1.2)}), we have
\begin{equation}\label{3.8}
\begin{split}
II&\leq \frac{C}{w(B)^{\lambda}}2^{-knp}\int_{B}\|f_{2}\|_{L^{1}(\mathbb{R}^{n})}^{p}w(x)dx
\\
&\leq \frac{C}{w(B)^{\lambda-1}}2^{-knp}\left(\int_{(2B)^{c}}|f(y)|dy\right)^{p}\\
&\leq C\|f\|_{M_{p,\lambda}(w)}^{p}\left(\sum_{k=1}^{\infty}\frac{w(B)^{(1-\lambda)/p}}{w(2^{k+1}B)^{(1-\lambda)/p}}\right)^{p}\\
&\leq C\|f\|_{M_{p,\lambda}(w)}^{p}.
\end{split}
\end{equation}
Here we use (\ref{3.1}) in the last inequality above. Combing
(\ref{3.7}) with (\ref{3.8}), we get (\ref{3.6}), which yields the
proof of (a).

Now, we have a position to give the proof of (b), which is similar to that of (a). We want to set up the following inequality
$$
\sup_{\mu>0}\frac{\mu}{w(B)^{\lambda}}w\left(\{x\in B: |\mathcal{T}f(x)|>\mu\}\right)\leq C\|f\|_{M_{1,\lambda}(w)}^{p}.
$$
Decompose $f=f\chi_{2B}+f\chi_{(2B)^{c}}:=f_{1}+f_{2}$ with $B$ as
that of (a). For any given $\mu>0$, we write
\begin{align*}
w\left(\{x\in B: |\mathcal{T}f(x)|>\lambda\}\right)&\leq w\left(\left\{x\in B: |\mathcal{T}f_{1}(x)|>{\mu}/{2}\right\}\right)+w\left(\left\{x\in B: |\mathcal{T}f_{2}(x)|>{\mu}/{2}\right\}\right)\\
&:= J+JJ.
\end{align*}
An application of (\ref{3.1}) and the weighted weak (1,1) type
estimates for $\mathcal{T}$ yield that
$$
J\leq w\left(\left\{x\in \mathbb{R}^{n}:
|\mathcal{T}f_{1}(x)|>{\mu}/{2}\right\}\right)\leq
{C}/{\mu}\|f\|_{M_{1,\lambda}}(w)w(B)^{\lambda}.
$$

Next we turn to deal with the term $JJ.$ An elementary estimate
shows
$$
JJ\leq \frac{C}{\mu}\int_{\left\{x\in B: |\mathcal{T}f(x)|>\frac{\mu}{2}\right\}}|\mathcal{T}f_{2}(x)|w(x)dx.
$$
Applying $(\ref{(1.2)})$, we conclude that
$$
|\mathcal{T}f_{2}(x)|
\leq C2^{-kn}\int_{(2B)^{c}}|f(y)|dy\leq C\sum_{k=1}^{\infty}2^{-kn}\int_{2^{k+1}B}|f(y)|dy.
$$
H\"{o}lder's inequality and the $A_{1}$ condition imply that
\begin{align*}
JJ&\leq
\frac{C}{\mu}\sum_{k=1}^{\infty}2^{-kn}\int_{2^{k+1}B}|f(y)|w(y)dy\\
&\leq\frac{C}{\mu}\|f\|_{M_{1,k}}(w)\sum_{k=1}^{\infty}2^{kn(\lambda-1)}w(B)^{\lambda}\\
&\leq \frac{C}{\mu}\|f\|_{M_{1,k}}(w)w(B)^{\lambda}.
\end{align*}
Then, we have completed the proof of (b).
\qed
\subsection{Proof of Theorem \ref{Theorem 1.2}}\label{section3.2}
The proof of Theorem \ref{Theorem 1.2} depend heavily on the
following remarks about $BMO$ functions.

\begin{lemma}\,\, \rm{\cite{T}}\label{lemma 3.2} \quad
Let $1\leq p<\infty$, $b\in BMO(\mathbb{R}^{n})$. Then for any
ball $B\subset \mathbb{R}^{n}$, the following statements are true

$\mathrm{(a)}$\,\, There exist constants $C_{1}$, $C_{2}$ such
that for all $\alpha>0$
\begin{equation}\label{3.9}
\left|\{x\in B:|b(x)-b_{B}|>\alpha\}\right|\leq C_{1}|B|e^{-C_{2}\alpha/\|b\|_{BMO(\mathbb{R}^{n})}}.
\end{equation}
The inequality $(\ref{3.9})$ is also called John-Nirenberg inequality.

$\mathrm{(b)}$\,\,
\begin{equation}\label{3.10}
|b_{2^{\lambda}B}-b_{B}|\leq2^{n}\lambda \|b\|_{BMO(\mathbb{R}^{n})}.
\end{equation}
\end{lemma}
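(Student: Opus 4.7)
\textbf{Proof plan for Lemma \ref{lemma 3.2}.} Both parts are classical consequences of the definition of $BMO$, so the plan is to handle each in turn. For part (b) I would use a straightforward telescoping argument: writing $2^{\lambda}B$ as a chain of $\lambda$ successive doublings of $B$, I expand
\[
b_{2^{\lambda}B}-b_{B}=\sum_{k=0}^{\lambda-1}\bigl(b_{2^{k+1}B}-b_{2^{k}B}\bigr)
\]
and bound each summand by
\[
|b_{2^{k+1}B}-b_{2^{k}B}|\leq \frac{|2^{k+1}B|}{|2^{k}B|}\cdot\frac{1}{|2^{k+1}B|}\int_{2^{k+1}B}|b(y)-b_{2^{k+1}B}|\,dy\leq 2^{n}\|b\|_{BMO(\mathbb{R}^{n})},
\]
which on summation yields $(\ref{3.10})$.

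Part (a), the John--Nirenberg inequality, is the substantive one and is where the real work lies. After normalizing $\|b\|_{BMO(\mathbb{R}^{n})}=1$, I introduce
\[
\Phi(\alpha)=\sup_{B}\frac{1}{|B|}\bigl|\{x\in B:|b(x)-b_{B}|>\alpha\}\bigr|
\]
and aim to derive a self-improving recursion of the form $\Phi(\alpha)\leq \alpha_{0}^{-1}\,\Phi(\alpha-c_{n}\alpha_{0})$ for suitable constants. Iterating this recursion produces geometric (hence exponential) decay in $\alpha$, which is precisely the bound $(\ref{3.9})$.

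To obtain the recursion I would fix a ball $B$ and run a Calder\'{o}n--Zygmund stopping-time decomposition of $|b-b_{B}|$ on $B$ at a fixed height $\alpha_{0}>1$; this is permissible because $\frac{1}{|B|}\int_{B}|b-b_{B}|\,dy\leq 1$. The output is a disjoint family $\{Q_{j}\}\subset B$ with $\alpha_{0}<\frac{1}{|Q_{j}|}\int_{Q_{j}}|b-b_{B}|\,dy\leq 2^{n}\alpha_{0}$ and $|b-b_{B}|\leq \alpha_{0}$ a.e.\ off $\bigcup_{j}Q_{j}$, together with $\sum_{j}|Q_{j}|\leq \alpha_{0}^{-1}|B|$ and $|b_{Q_{j}}-b_{B}|\leq 2^{n}\alpha_{0}$. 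For $\alpha>2^{n}\alpha_{0}$, any $x\in B$ with $|b(x)-b_{B}|>\alpha$ must lie in some $Q_{j}$ and satisfy $|b(x)-b_{Q_{j}}|>\alpha-2^{n}\alpha_{0}$; applying the definition of $\Phi$ on each $Q_{j}$ and summing then yields the desired recursion.

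The main obstacle is organizing the iteration cleanly: one has to start it only once $\alpha$ exceeds the threshold $2^{n}\alpha_{0}$, handle the small-$\alpha$ regime separately using the trivial bound $\Phi(\alpha)\leq 1$, and track the dependence of $C_{1}$ and $C_{2}$ on the dimension so as to guarantee $C_{2}>0$ strictly. Once this bookkeeping is in place, the exponential decay follows automatically, and part (a) is established.
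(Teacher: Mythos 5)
The paper offers no proof of this lemma; it is quoted directly from \cite{T}. Your plan reproduces the standard argument --- telescoping over successive doublings for part (b), and the Calder\'{o}n--Zygmund stopping-time recursion $\Phi(\alpha)\leq \alpha_{0}^{-1}\Phi(\alpha-2^{n}\alpha_{0})$ for the John--Nirenberg inequality in part (a) --- and both parts are correct as outlined. The only detail worth making explicit is that the stopping-time decomposition is naturally run over dyadic subcubes of a cube rather than over balls, so one proves (a) for cubes first and transfers to balls via the standard comparability of the ball-based and cube-based $BMO$ seminorms; this is routine bookkeeping, not a gap.
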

\begin{lemma} \,\,\rm{ \cite{MW2}}\label{lemma 3.3}
 Let $w\in A_{\infty}$. Then the following statements are equivalent

$\mathrm{(a)}$\,\,
\begin{equation}\label{3.11}
\|b\|_{BMO(\mathbb{R}^{n})}\sim \sup_{B}\left(\frac{1}{|B|}\int_{B}|b(x)-b_{B}|^{p}dx\right)^{\frac{1}{p}}.
\end{equation}

$\mathrm{(b)}$\,\,
\begin{equation}\label{3.12}
\|b\|_{BMO(\mathbb{R}^{n})}\sim \sup_{B}\inf_{a\in \mathbb{R}}\frac{1}{|B|}\int_{B}|b(x)-a|dx.
\end{equation}

$\mathrm{(c)}$\,\,
\begin{equation}\label{3.13}
\|b\|_{BMO(w)}=\sup_{B}\frac{1}{w(B)}\int_{B}|b(x)-b_{B,w}|w(x)dx.
\end{equation}
where $BMO(w)=\{b:\|b\|_{BMO(w)}<\infty\}$ and
$b_{B,w}=\frac{1}{w(B)}\int_{B}b(y)w(y)dy.$
\end{lemma}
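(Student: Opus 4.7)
The plan is to prove each of (a), (b), (c) separately equivalent to the standard $\|b\|_{BMO(\mathbb{R}^{n})}$ norm; by transitivity this yields the three-way equivalence.

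Parts (a) and (b) are soft. For (a), Jensen's inequality (valid since $p\geq 1$) gives the lower bound. For the reverse direction, I would invoke the John-Nirenberg estimate (Lemma \ref{lemma 3.2}(a)) and the layer-cake identity
$$\int_{B}|b-b_{B}|^{p}\,dx=p\int_{0}^{\infty}\alpha^{p-1}\left|\{x\in B:|b(x)-b_{B}|>\alpha\}\right|d\alpha,$$
after which (\ref{3.9}) turns the right-hand side into a Gamma-type integral bounded by $C_{p}|B|\|b\|_{BMO(\mathbb{R}^{n})}^{p}$. For (b), taking $a=b_{B}$ in the infimum yields one inequality immediately. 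For the reverse, the bound $|b_{B}-a|\leq |B|^{-1}\int_{B}|b-a|\,dx$ together with the triangle inequality gives $|B|^{-1}\int_{B}|b-b_{B}|\,dx\leq 2|B|^{-1}\int_{B}|b-a|\,dx$ for every $a$; taking the infimum in $a$ completes the argument.

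The core of the lemma is (c), asserting $\|b\|_{BMO(w)}\sim \|b\|_{BMO(\mathbb{R}^{n})}$. For the direction $\|b\|_{BMO(w)}\lesssim \|b\|_{BMO(\mathbb{R}^{n})}$, I would first observe $|b_{B,w}-b_{B}|\leq w(B)^{-1}\int_{B}|b-b_{B}|\,w\,dx$, reducing the task to estimating $w(B)^{-1}\int_{B}|b-b_{B}|\,w\,dx$. The crucial input is a weighted tail bound obtained by combining the unweighted John-Nirenberg estimate (\ref{3.9}) with the $A_{\infty}$ property (\ref{3.5}) of Lemma \ref{lemma 3.1}(e):
$$\frac{w(\{x\in B:|b(x)-b_{B}|>\alpha\})}{w(B)}\leq C\bigl(C_{1}e^{-C_{2}\alpha/\|b\|_{BMO(\mathbb{R}^{n})}}\bigr)^{\delta}.$$
Integrating this exponential tail in $\alpha$ via the layer-cake formula furnishes the required bound.

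The principal obstacle is the reverse inequality $\|b\|_{BMO(\mathbb{R}^{n})}\lesssim\|b\|_{BMO(w)}$, which transfers information from the $w$-weighted to the unweighted mean oscillation. I would invoke the characterization (b) with the choice $a=b_{B,w}$, so that it suffices to bound $|B|^{-1}\int_{B}|b-b_{B,w}|\,dx$ by $C\|b\|_{BMO(w)}$. The natural route is H\"older's inequality with the split $1=w^{1/p}\cdot w^{-1/p}$, yielding a weighted $L^{p}$ mean oscillation $\bigl(\int_{B}|b-b_{B,w}|^{p}w\bigr)^{1/p}$ together with $\bigl(\int_{B}w^{-p'/p}\bigr)^{1/p'}$. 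The latter is controlled by the $A_{p}$ condition of the weight, which supplies precisely the $(|B|/w(B))^{1/p}$ factor needed for the powers to cancel. The former requires a weighted $L^{p}$ characterization of $BMO(w)$, which in turn follows from a weighted John-Nirenberg inequality proved via a Calder\'on-Zygmund decomposition with respect to the doubling measure $w\,dx$; this weighted John-Nirenberg is the most delicate ingredient.
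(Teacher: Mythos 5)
The paper does not actually prove this lemma; it is quoted from Muckenhoupt and Wheeden \cite{MW2}, so there is no in-paper argument to compare against. Your outline is essentially the standard proof and is sound. Parts (a) and (b) are handled exactly as one should: Jensen plus John--Nirenberg with the layer-cake formula for (a), and the choice $a=b_B$ together with $|b_B-a|\le |B|^{-1}\int_B|b-a|\,dx$ for (b). For the easy half of (c), combining (\ref{3.9}) with (\ref{3.5}) to get an exponential weighted tail bound and integrating is correct; note that this is the only direction of (c) the paper actually uses (in the proof of Lemma \ref{lemma 3.4}, to bound $\|b\|_{BMO(w^{1-p'})}$ by a constant times $\|b\|_{BMO(\mathbb{R}^n)}$).

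Two points should be made explicit in the hard direction $\|b\|_{BMO(\mathbb{R}^n)}\lesssim\|b\|_{BMO(w)}$. First, the hypothesis is only $w\in A_\infty$, so before running H\"older with the split $w^{1/p}w^{-1/p}$ you must fix some finite $p$ with $w\in A_p$ (which exists since $A_\infty=\bigcup_{p<\infty}A_p$); this $p$ has nothing to do with the exponent appearing in (a). Second, the weighted John--Nirenberg inequality you defer to is genuinely necessary and not a removable convenience: feeding the mere Chebyshev bound $w(\{x\in B:|b(x)-b_{B,w}|>\alpha\})\le \alpha^{-1}\|b\|_{BMO(w)}\,w(B)$ into the reverse $A_\infty$ comparison $|E|/|B|\le C\left(w(E)/w(B)\right)^{\gamma}$ yields only the polynomial decay $\alpha^{-\gamma}$, which fails to be integrable when $\gamma\le 1$, so the exponential decay from a John--Nirenberg estimate relative to $w\,dx$ is really needed. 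That estimate does hold for any doubling measure (doubling of $w\,dx$ is Lemma \ref{lemma 3.1}(a)) by the usual Calder\'on--Zygmund stopping-time argument on dyadic cubes, as you indicate; with that ingredient supplied or cited, your proof is complete.
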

\begin{lemma}\label{lemma 3.4}
Let $b\in BMO(\mathbb{R}^{n})$, $B=B(x_{0},r)$, $0<\lambda<1$ and
$1<p<\infty$. Then the inequality
\begin{equation}\label{3.14}
\left(\int_{|x_{0}-y|>2r}\frac{|f(y)|}{|x_{0}-y|^{n}}|b_{B,w}-b(y)|dy\right)^{p}w(B)^{1-\lambda}\leq
C\|f\|_{M_{p,\lambda}(w)}^{p}\|b\|_{BMO(\mathbb{R}^{n})}^{p}.
\end{equation}
holds for every $y\in (2B)^{c}$, where $(2B)^{c}=\mathbb{R}^{n}/2B.$
\end{lemma}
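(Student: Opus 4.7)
The plan is to use a dyadic annular decomposition of $(2B)^c$ combined with weighted H\"older and weighted $BMO$ estimates. On the annulus $2^{k+1}B\setminus 2^k B$ the kernel satisfies $|x_0-y|^{-n}\leq C|2^{k+1}B|^{-1}$, so
\[
\int_{|x_0-y|>2r}\frac{|f(y)|}{|x_0-y|^n}|b_{B,w}-b(y)|\,dy
\leq C\sum_{k=1}^{\infty}\frac{1}{|2^{k+1}B|}\int_{2^{k+1}B}|f(y)|\,|b_{B,w}-b(y)|\,dy.
\]

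Next, I would split the $BMO$ factor as
\[
|b_{B,w}-b(y)| \leq |b_{B,w}-b_B| + |b_B-b_{2^{k+1}B}| + |b_{2^{k+1}B}-b(y)|.
\]
The middle term is at most $Ck\|b\|_{BMO}$ by (\ref{3.10}). The first term is also $O(\|b\|_{BMO})$: combining John--Nirenberg (\ref{3.9}) with the $A_\infty$ property (\ref{3.5}) of $w$ yields the weighted $L^p$ oscillation estimate
\[
\left(\frac{1}{w(B)}\int_B|b-b_B|^{p}w\right)^{1/p}\leq C\|b\|_{BMO},
\]
and H\"older's inequality then gives $|b_{B,w}-b_B|\leq C\|b\|_{BMO}$ (this is essentially Lemma \ref{lemma 3.3}(c) after comparing the weighted and unweighted averages). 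For the last piece $|b_{2^{k+1}B}-b(y)|$, I would factor the integrand as $|f|w^{1/p}\cdot|b-b_{2^{k+1}B}|w^{-1/p}$ and apply H\"older with exponents $p,p'$. Since $w^{-p'/p}\in A_{p'}\subset A_\infty$, the same weighted $L^{p'}$ John--Nirenberg estimate applied with weight $w^{-p'/p}$ controls $\bigl(\int_{2^{k+1}B}|b-b_{2^{k+1}B}|^{p'}w^{-p'/p}\bigr)^{1/p'}$ by $C\|b\|_{BMO}\bigl(w^{-p'/p}(2^{k+1}B)\bigr)^{1/p'}$, which by the $A_p$ duality estimate is at most $C\|b\|_{BMO}\,|2^{k+1}B|\,w(2^{k+1}B)^{-1/p}$. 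Combined with the Morrey bound $\bigl(\int_{2^{k+1}B}|f|^p w\bigr)^{1/p}\leq \|f\|_{M_{p,\lambda}(w)} w(2^{k+1}B)^{\lambda/p}$, each of the three contributions to the $k$-th summand is dominated by
\[
C(k+1)\|b\|_{BMO}\|f\|_{M_{p,\lambda}(w)}\,w(2^{k+1}B)^{(\lambda-1)/p}.
\]

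Finally, the reverse doubling property (\ref{3.2}) gives $w(2^{k+1}B)\geq C_0^{\,k}w(B)$ for some $C_0>1$, and since $(\lambda-1)/p<0$ the series $\sum_{k\geq1}(k+1)C_0^{\,k(\lambda-1)/p}$ converges, so the total sum is controlled by $C\|b\|_{BMO}\|f\|_{M_{p,\lambda}(w)}w(B)^{(\lambda-1)/p}$. Raising to the $p$-th power and multiplying by $w(B)^{1-\lambda}$ produces the desired inequality (\ref{3.14}).

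The main obstacle I anticipate is handling the weighted average $b_{B,w}$ cleanly: the unweighted comparison (\ref{3.10}) does not apply to weighted averages directly, and one must pass from $b_{B,w}$ to $b_B$ (and later to $b_{2^{k+1}B,w^{-p'/p}}$ versus $b_{2^{k+1}B}$) via the weighted $L^p$ John--Nirenberg bound guaranteed by the $A_\infty$ membership of both $w$ and $w^{-p'/p}$. Once that step is in place, the remainder is bookkeeping with $A_p$ duality, the Morrey-norm definition, and the geometric series arising from reverse doubling.
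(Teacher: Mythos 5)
Your proof is correct and follows essentially the same route as the paper's: dyadic annuli, H\"older with the dual weight $w^{1-p'}=w^{-p'/p}$, the weighted John--Nirenberg/$A_\infty$ estimates of Lemmas \ref{lemma 3.2}--\ref{lemma 3.3} to compare weighted and unweighted averages, the $A_p$ duality bound, and reverse doubling to sum the series. The only cosmetic difference is that you split $|b_{B,w}-b(y)|$ through the unweighted average $b_{2^{k+1}B}$ before applying H\"older, whereas the paper applies H\"older first and then routes through the $w^{1-p'}$-weighted average; the substance is identical.
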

\begin{proof}
Using H\"{o}lder's inequality to the left-hand-side of $(\ref{3.14})$,
we have
\begin{align*}
&\left(\int_{|x_{0}-y|>2r}\frac{|f(y)|}{|x_{0}-y|^{n}}|b_{B,w}-b(y)|dy\right)^{p}w(B)^{1-\lambda}\\
&\leq \left(\sum_{j=1}^{\infty}\int_{2^{j}r<|x_{0}-y|<2^{j+1}r}\frac{|f(y)|}{|x_{0}-y|^{n}}|b_{B,w}-b(y)|dy\right)^{p}w(B)^{1-\lambda}\\
&\leq \left(\sum_{j=1}^{\infty}\frac{1}{|2^{j}B|}\int_{2^{j+1}B}|f(y)||b_{B,w}-b(y)|dy\right)^{p}w(B)^{1-\lambda}\\
&\leq C\left[\sum_{j=1}^{\infty}\frac{1}{|2^{j}B|}\left(\int_{2^{j+1}B}|f(y)|^{p}w(y)dy\right)^{\frac{1}{p}}\left(\int_{2^{j+1}B}|b_{B,w}-b(y)|^{p'}w(y)^{1-p'}dy\right)^{\frac{1}{p'}}\right]^{p}w(B)^{1-\lambda}\\
&\leq
C\|f\|_{M_{p,\lambda}(w)}^{p}\left[\sum_{j=1}^{\infty}\frac{w(2^{j+1}B)^{\frac{\lambda}{p}}}{|2^{j}B|}\left(\int_{2^{j+1}B}|b_{B,w}-b(y)|^{p'}w(y)^{1-p'}dy\right)^{\frac{1}{p'}}\right]^{p}w(B)^{1-\lambda}.
\end{align*}

For the simplicity of analysis, we denote $A$ as
$$
\left(\int_{2^{j+1}B}|b_{B,w}-b(y)|^{p'}w(y)^{1-p'}dy\right)^{\frac{1}{p'}}.
$$
By an elementary estimate, we have
\begin{align*}
A
&\leq \left(\int_{2^{j+1}B}(|b_{2^{j+1}B,w^{1-p'}}-b(y)|+|b_{2^{j+1}B,w^{1-p'}}-b_{B,w}|)^{p'}w(y)^{1-p'}dy\right)^{\frac{1}{p'}}\\
&\leq \left\|\frac{|b_{2^{j+1}B,w^{1-p'}}-b(\cdot)|+|b_{2^{j+1}B,w^{1-p'}}-b_{B,w}|}{w(\cdot)}\right\|_{L^{p'}(w)}\\
&\leq \left(\int_{2^{j+1}B}|b_{2^{j+1}B,w^{1-p'}}-b(y)|w(y)^{1-p'}dy\right)^{\frac{1}{p'}}+|b_{2^{j+1}B,w^{1-p'}}-b_{B,w}|w^{1-p'}(2^{j+1}B)^{\frac{1}{p'}}\\
&= :A_{1}+A_{2}.
\end{align*}

For the term $A_{1}$, Lemma \ref{lemma 3.3} implies
\begin{equation}\label{3.15}
A_{1}\leq
C\|b\|_{BMO(w^{1-p'})}w^{1-p'}(2^{j+1}B)^{\frac{1}{p'}}\leq
Cw^{1-p'}(2^{j+1}B)^{\frac{1}{p'}}.
\end{equation}
To deal with $A_{2}$, by $(\ref{3.10})$, we have
\begin{equation*}
\begin{split}
|b_{2^{j+1}B,w^{1-p'}}-b_{B,w}|
&\leq |b_{2^{j+1}B,w^{1-p'}}-b_{2^{j+1}B}|+|b_{2^{j+1}B}-b_{B}|+|b_{B}-b_{B,w}|\\
&\leq \frac{1}{w^{1-p'}(2^{j+1}B)}\int_{2^{j+1}B}|b(y)-b_{2^{j+1}B}|w(y)^{1-p'}dy+2^{n}(j+1)\|b\|_{BMO(\mathbb{R}^{n})}\\
&\quad\quad\quad\quad+\frac{1}{w(B)}\int_{B}|b(y)-b_{B}|w(y)dy\\
&:=A_{21}+A_{22}+A_{23}.
\end{split}
\end{equation*}
Combining $(\ref{3.5})$ with $(\ref{3.9})$, we have
\begin{align*}
A_{23}
&= \frac{1}{w(B)}\int_{0}^{\infty}w(\{x\in B:|b(y)-b_{B}|>\alpha \})d\alpha\\
&\leq C\int_{0}^{\infty}e^{-C_{2}\alpha\delta/\|b\|_{BMO(\mathbb{R}^{n})}}d\alpha\\
&\leq C.
\end{align*}
In the same manner we can see that
$$
A_{21}\leq C.
$$
It follows immediately that
\begin{equation}\label{3.16}
A_{2}\leq C(2^{n}(j+1)+2)w^{1-p'}(2^{j+1}B)^{\frac{1}{p'}}.
\end{equation}
As a by-product of $(\ref{3.15})$ and $(\ref{3.16})$, we have
$$
A\leq C(j+1)w^{1-p'}(2^{j+1}B)^{\frac{1}{p'}}.
$$
Then, applying $(\ref{3.2})$, the proof of $(\ref{3.14})$ based on the
following observation
\begin{align*}
&\left[\sum_{j=1}^{\infty}\frac{w(2^{j+1}B)^{\frac{k}{p}}}{|2^{j}B|}\left(\int_{2^{j+1}B}|b(y)-b_{B,w}|^{p'}w(y)^{1-p'}dy\right)^{\frac{1}{p'}}\right]^{p}
w(B)^{1-k}\\
&\leq
C\left[\sum_{j=1}^{\infty}\frac{w(B)^{\frac{1-k}{p}(j+1)}}{w(2^{j+1}B)^{\frac{1-k}{p}}}\right]^{p}=C.
\end{align*}
\end{proof}

Now, we come back to the proof of Theorem \ref{Theorem 1.2}. (a)
is trivial since (\ref{(1.3)}) satisfies Theorem \ref{Theorem
1.1}. We only need to give the proof of (b).  The task is now to
find a constant $C$ such that for fixed ball $B=B(x_{0},1)$, we
can obtain
\begin{equation}\label{3.17}
\frac{1}{w(B)^{\lambda}}\int_{B}\left|\mathcal{\overline{T}}_{b}f(x)\right|^{p}w(x)dx\leq
C\|f\|_{M_{p,\lambda}(w)}^{p}.
\end{equation}

We decompose $f=f\chi_{2B}+f\chi_{(2B)^{c}}:=f_{1}+f_{2},$ and
consider the corresponding splitting
\begin{align*}
\int_{B}\left|\mathcal{\overline{T}}_{b}f(x)\right|^{p}w(x)dx&\leq C\left(\int_{B}|\mathcal{\overline{T}}_{b}f_{1}(x)|^{p}w(x)dx+\int_{B}|\mathcal{\overline{T}}_{b}f_{2}(x)|^{p}w(x)dx\right)\\
&=:K+KK.
\end{align*}

It follows from the $L^{p}(w)$ boundedness of
$\mathcal{\overline{T}}_{b}$ and $w\in A_{p}$ that
\begin{equation}\label{3.18}
K\leq C\int_{2B}|f(x)|^{p}w(x)dx\leq
C\|f\|_{M_{p,\lambda}(w)}^{p}w(B)^{\lambda}.
\end{equation}
 Then a further use of
$(\ref{(1.3)})$ derives that
\begin{align*}
\left|\mathcal{\overline{T}}_{b}f_{2}(x)\right|^{p}
&\leq C\left(\int_{\mathbb{R}^{n}}\frac{|f_{2}(y)||b(x)-b(y)|}{|x-y|^{n}}dy\right)^{p}\\
&\leq
C\left(\int_{|x_{0}-y|>2}\frac{|f(y)|}{|x_{0}-y|^{n}}\{|b(x)-b_{B,w}|+|b_{B,w}-b(y)|\}dy\right)^{p}.
\end{align*}
where $b_{B,w}=\frac{1}{w(B)}\int_{B}b(x)w(x)dx$. Then, we have
\begin{align*}
KK&\leq C\left(\int_{|x_{0}-y|>2}\frac{|f(y)|}{|x_{0}-y|^{n}}dy\right)^{p}\int_{B}|b(x)-b_{B,w}|^{p}w(x)dx\\
&\quad\quad\quad\quad+C\left(\int_{|x_{0}-y|>2}\frac{|f(y)|}{|x_{0}-y|^{n}}|b(y)-b_{B,w}|dy\right)^{p}w(B)\\
&:= KK_{1}+KK_{2}.
\end{align*}
A further use of Lemma \ref{lemma 3.4}, we get
$$KK_{2}\leq C\|f\|_{M_{p,\lambda}(w)}^{p}w(B)^{\lambda}.$$

To get the desired estimate, we are led to estimate the term
$KK_{1}$. This estimate will be done via (\ref{3.1}), (\ref{3.3})
and Lemma \ref{lemma 3.3}. In fact,

\begin{equation*}
\begin{split}
KK_{1}&=\left(\sum_{j=1}^{\infty}\int_{2^{j}<|x_{0}-y|<2^{j+1}}\frac{|f(y)|}{|x_{0}-y|^{n}}dy\right)^{p}\int_{B}|b(x)-b_{B,w}|^{p}w(x)dx\\
&\leq\left(\sum_{j=1}^{\infty}\frac{1}{|2^{j}B|}\int_{2^{j+1}B}|f(y)|dy\right)^{p}\int_{B}|b(x)-b_{B,w}|^{p}w(x)dx\\
&\leq C\sum_{j=1}^{\infty}\frac{1}{|2^{j}B|}\left(\frac{1}{w(2^{j+1}B)^{\lambda}}\int_{2^{j+1}B}|f(y)|^{p}w(y)dy\right)^{{1}/{p}}\\
&\quad\quad\quad\quad\times w(2^{j+1}B)^{{\lambda}/{p}}\left(\int_{2^{j+1}B}w(y)^{-{1}/{p-1}}dy\right)^{\frac{p-1}{p}}\int_{B}|b(x)-b_{B,w}|^{p}w(x)dx\\
&\leq C\|f\|_{M_{p,\lambda}(w)}\left(\sum_{j=1}^{\infty}\frac{|2^{j+1}B|^{-\frac{1}{p}}}{|2^{j}B|}\left(\frac{1}{|2^{j+1}B|}\int_{2^{j+1}B}w(y)dy\right)^{-{1}/{p}}w(2^{j+1}B)^{{\lambda}/{p}}\right)^{p}\\
&\quad\quad\quad\quad\times \int_{B}|b(x)-b_{B,w}|^{p}w(x)dx\\
&\leq C\|f\|_{M_{p,\lambda}(w)}^{p}\|b\|_{BMO(\mathbb{R}^{n})}^{p}\sum_{j=1}^{\infty}\left(\frac{w(B)^{{(1-\lambda)}/{p}}}{w(2^{j+1}B)^{{(1-k)}/{p}}}\right)^{p}w(B)^{\lambda}\\
&\leq C\|f\|_{M_{p,\lambda}(w)}^{p}w(B)^{\lambda}
\end{split}
\end{equation*}
Hence
\begin{equation}\label{3.19}
KK\leq C\|f\|_{M^{p,\lambda}(w)}^{p}w(B)^{\lambda}.
\end{equation}

Combing (\ref{3.18}), (\ref{3.19}), we obtain (\ref{3.17}), which is the
desired conclusion.\qed

\subsection{Proof of Theorem \ref{Theorem 1.3}}\label{section3.3}

We can use the similar argument as the proof of Theorem
\ref{Theorem 1.1}. For the proof of (a), it suffices to show that
\begin{equation}\label{3.20}
\frac{1}{w^{q}(B)^{q\lambda/p}}\int_{B}|\mathcal{T}_{\alpha}f(x)|^{q}w(x)^{q}dx\leq
C\|f\|_{M_{p,\lambda}(w^{p},w^{q})}^{q}.
\end{equation}
For a fixed ball $B=B(x_{0},1)$, we decompose
$f=f\chi_{2B}+f\chi_{(2B)^{c}}:=f_{1}+f_{2}$.
Since $\mathcal{T}_{\alpha}$ is a sublinear operator, so we get
$$
\frac{1}{w^{q}(B)^{q\lambda/p}}\int_{B}|\mathcal{T}_{\alpha}f(x)|^{q}w(x)^{q}dx\leq
\frac{1}{w^{q}(B)^{q\lambda/p}}\int_{B}(|\mathcal{T}_{\alpha}f_{1}(x)|^{q}+|\mathcal{T}_{\alpha}f_{2}(x)|^{q})w^{q}(x)dx:=L+LL.
$$
To estimate the term $L$, using the fact that $\mathcal{T}_{\alpha}$  is bounded from $L^{p}(w^{p})$ to $L^{q}(w^{q})$ with $w\in A_{(p,q)}$, we can get
$$
\int_{B}|\mathcal{T}_{\alpha}f_{1}(x)|^{q}w^{q}(x)dx\leq
C\|f\|_{M_{p,\lambda}(w^{p},w^{q})}^{q}w^{q}(B)^{q\lambda/p},
$$
which implies that
$$
L\leq C\|f\|_{M_{p,\lambda}(w^{p},w^{q})}.
$$

For the term $LL$. By the similar argument as that of Theorem \ref{Theorem 1.1}, we obtain
\begin{align*}
LL&\leq C\sum_{k}\left(2^{-k(n-\alpha)}\int_{A_{k}}|f(y)|dy\right)^{q}w^{q}(B)^{1-q\lambda/p}
\\
&\leq C\sum_{k}\left(2^{-k(n-\alpha)}\|f\|_{M_{p,\lambda}(w^{p},w^{q})}|2^{k+1}B|^{1-\alpha/n}\frac{1}{w^{q}(2^{k+1}B)^{1/q-\lambda/p}}\right)^{q}w^{q}(B)^{1-q\lambda/p}
\\
&\leq C\|f\|_{M_{p,\lambda}(w^{p},w^{q})}^{q}\left(\sum_{k=1}^{\infty}\frac{w^{q}(B)^{(1/q-\lambda/p)}}{w^{q}(2^{k+1}B)^{(1/q-\lambda/p)}}\right)^{q}\\
&\leq C\|f\|_{M_{p,\lambda}(w^{p},w^{q})}^{q}.
\end{align*}
We have completed the proof of (a).

We shall omit the proof of (b) since we can prove by using $A_{(1,q)}$ condition and the weak type estimates of $\mathcal{T}_{\alpha}$.
\qed

\subsection{Proof of Theorem \ref{Theorem 1.4}}\label{section3.4}
The proof of Theorem \ref{Theorem 1.4} is similar to that of
Theorem \ref{Theorem 1.2}, except using $w\in A_{(p,q)}$.

\section{Applications}\label{section3}
In this section, we shall give some applications of our main results to nondivergence elliptic equations.
Dirichlet problem on the second order elliptic equation in nondivergence form is

\begin{equation}\label{(2.1)}
\left\{\begin{array}{ll}
&Lu=\sum_{i,j}^{n}a_{ij}(x)u_{x_{i}}u_{x_{j}}=f\,\quad a.e\quad in \,\,\,\,\Omega,\\
&u=0\,\quad on\quad \partial\Omega.
\end{array}\right.
\end{equation}
Here $x=(x_{1},\cdots,x_{n})\in \mathbb{R}^{n}$, $\Omega$ is a
bounded domain of $\mathbb{R}^{n}$. The coefficients
$(a_{ij})_{i,j=1}^{n}$ of $L$ are symmetric and uniformly
elliptic, i.e., for some $\nu\geq 1$ and every $\xi\in
\mathbb{R}^{n}$, $a_{ij}(x)=a_{ji}(x)$ and $\nu^{-1}|\xi|^{2}\leq
\sum_{i,j=1}^{n}a_{ij}(x)\xi_{i}\xi_{j}\leq\nu|\xi|^{2}$ with
a.e.$x\in \Omega$. In \cite{FLY}, Fan, Lu and Yang investigate the
regularity in $M_{p,\lambda}(\Omega)$ of the strong solution to
(\ref{(2.1)}) with $a_{ij}\in VMO(\Omega)$, the space of the
functions of vanishing mean oscillation introduced by Sarason in
\cite{Sar}. The main method of \cite{FLY} is based on integral
representation formulas established in \cite{CFL1} and \cite{CFL2}
for the second derivatives of the solution $u$ to (\ref{(2.1)}),
and on the theories of singular integrals and sublinear
commutators in Morrey spaces.

By extending some theorems of \cite{FLY} to weighted versions, we
can establish regularity in weighted Morrey spaces of strong
solutions to nondivergence elliptic equations with $VMO$
coefficients. Theorem \ref{Theorem 1.2} is just the weighted
version of the important theorem-Theorem 2.1 in \cite{FLY}. For
the complement of our paper, we take another important
theorem-Theorem 2.3 of \cite{FLY} to state this. All other proofs
of the corresponding theorems are straightforward.

Let
$\mathbb{R}_{+}^{n}=\{x=(x',x_{n}):x'=(x_{1},\cdots,x_{n-1})\in
\mathbb{R}^{n-1}, x_{n}>0\}$,
$L^{p}_{+}(w)=L^{p}(w,\mathbb{R}_{+}^{n})$ and
$M_{p,\lambda}^{+}=M_{p,\lambda}(w,\mathbb{R}_{+}^{n})$. To
establish the boundary estimates of the solutions to
(\ref{(2.1)}), we need the following general theorem for sublinear
operators.
\begin{theorem}\,\,\,\,\label{theorem 2.8}
Let $1<p<\infty$, $0<\lambda<1$, $w\in A_{p}$,
$\tilde{x}=(x',-x_{n})$ for $x=(x',x_{n})\in \mathbb{R}_{+}^{n}$.
If a sublinear operator $\mathfrak{T}$ is bounded on
$L^{p}_{+}(w)$ for any $f\in L^{1}_{+}(w)$ with compact support
and satisfies
\begin{equation}\label{(2.2)}
|\mathfrak{T}f(x)|\leq C\int_{\mathbb{R}_{+}^{n}}\frac{|f(y)|}{|\tilde{x}-y|^{n}}dy,
\end{equation}
then $\mathfrak{T}$ is bounded on $M_{p,\lambda}^{+}(w)$.
\end{theorem}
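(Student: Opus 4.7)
The plan is to adapt the argument for Theorem \ref{Theorem 1.2}(a) to the half-space setting. The key geometric observation is that for any $x,y\in\mathbb{R}_+^n$,
$$|\tilde{x}-y|^2=|x'-y'|^2+(x_n+y_n)^2\ge|x'-y'|^2+(x_n-y_n)^2=|x-y|^2,$$
so $|\tilde{x}-y|\ge|x-y|$. This replaces the reflected-distance kernel in \eqref{(2.2)} by the ordinary Calder\'{o}n--Zygmund-type kernel $|x-y|^{-n}$, which is precisely the form exploited in the proofs of Theorems \ref{Theorem 1.1}(a) and \ref{Theorem 1.2}(a).

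First I would fix a ball $B=B(x_0,r)$ and decompose $f=f_1+f_2$, where $f_1=f\chi_{2B}$ and $f_2=f\chi_{(2B)^c}$ are viewed as functions on $\mathbb{R}_+^n$. For the local piece, the hypothesized $L^p_+(w)$-boundedness of $\mathfrak{T}$ yields
$$\int_{B\cap\mathbb{R}_+^n}|\mathfrak{T}f_1(x)|^p w(x)\,dx\le C\int_{2B\cap\mathbb{R}_+^n}|f(x)|^p w(x)\,dx\le C\|f\|_{M^+_{p,\lambda}(w)}^p\,w(2B)^\lambda,$$
which is acceptable by the $A_p$ doubling property \eqref{3.1}.

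For the nonlocal piece I would dyadically decompose $(2B)^c=\bigcup_{j\ge 1}(2^{j+1}B\setminus 2^jB)$. For $x\in B\cap\mathbb{R}_+^n$ and $y\in(2^{j+1}B\setminus 2^jB)\cap\mathbb{R}_+^n$, the bound $|x-y|\ge 2^{j-1}r$ combined with the reflection inequality gives $|\tilde{x}-y|\ge 2^{j-1}r$, and therefore
$$|\mathfrak{T}f_2(x)|\le C\sum_{j=1}^{\infty}\frac{1}{|2^jB|}\int_{2^{j+1}B\cap\mathbb{R}_+^n}|f(y)|\,dy.$$
H\"{o}lder's inequality applied to each integral, combined with the $A_p$ condition on $w$ and the definition of the Morrey norm, controls the $j$-th summand by a constant multiple of $\|f\|_{M^+_{p,\lambda}(w)}\,w(2^{j+1}B)^{-(1-\lambda)/p}$. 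Reverse doubling \eqref{3.2} then makes this series geometric and sums it to $C\|f\|_{M^+_{p,\lambda}(w)}\,w(B)^{-(1-\lambda)/p}$, yielding $\int_{B\cap\mathbb{R}_+^n}|\mathfrak{T}f_2|^p w\,dx\le C\|f\|_{M^+_{p,\lambda}(w)}^p\,w(B)^\lambda$. Combining the two estimates completes the proof.

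The only real subtlety is bookkeeping. Since $w\in A_p$ is defined via balls in $\mathbb{R}^n$, the properties collected in Lemma \ref{lemma 3.1} apply to the balls $2^jB$ unchanged; restricting all integrals to $\mathbb{R}_+^n$ only decreases them, and so does not disrupt either the dyadic sum or the Morrey-norm step. Granting this, the entire argument is a direct half-space analogue of the proof of Theorem \ref{Theorem 1.2}(a), with the reflection inequality $|\tilde{x}-y|\ge|x-y|$ being the single new ingredient needed to bridge \eqref{(2.2)} to the Calder\'{o}n--Zygmund-type kernel bound used there.
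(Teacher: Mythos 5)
Your proof is correct, but it follows a genuinely different route from the paper's. You use the single pointwise inequality $|\tilde{x}-y|\geq|x-y|$ to dominate the reflected kernel by the standard one, and then run the local/non-local splitting of Theorem \ref{Theorem 1.2}(a) uniformly over all balls, invoking the $L^{p}_{+}(w)$-boundedness for the piece $f\chi_{2B}$ every time. The paper instead splits into two cases according to the position of the half-ball $B_{\delta}^{+}(z)$ relative to the hyperplane $\{x_{n}=0\}$: when $0\leq z_{n}<2\delta$ it argues essentially as you do, but when $2^{i}\delta\leq z_{n}<2^{i+1}\delta$ it exploits the stronger lower bound $|\tilde{x}-y|\geq x_{n}+y_{n}\gtrsim 2^{i}\delta$ (valid for \emph{all} $y\in\mathbb{R}_{+}^{n}$) to control even the piece of $f$ living on $B_{2^{i+4}\delta}^{+}(z)$ directly from the size condition \eqref{(2.2)}, without using the $L^{p}_{+}(w)$-boundedness there at all. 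Your argument is shorter and buys a cleaner reduction to the already-proved Euclidean case; the paper's case analysis buys the extra information that away from the boundary the reflected kernel alone controls the operator. One bookkeeping point you should make explicit: in the local estimate you need $w(2B\cap\mathbb{R}_{+}^{n})\leq Cw(B\cap\mathbb{R}_{+}^{n})$, and this does not follow from ``restricting integrals only decreases them'' (that inequality goes the wrong way for the denominator $w(B^{+})^{\lambda}$ of the Morrey quotient). It does follow from $w\in A_{p}$, since $|B\cap\mathbb{R}_{+}^{n}|\geq|B|/2$ for balls centered in $\overline{\mathbb{R}_{+}^{n}}$ and the $A_{p}$ condition gives $w(E)\geq c\left(|E|/|B|\right)^{p}w(B)$ for measurable $E\subset B$, whence $w(B^{+})\sim w(B)$ and the full-ball doubling and reverse doubling of Lemma \ref{lemma 3.1} transfer to half-balls. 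With that line added, your proof is complete.
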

\begin{proof}
Let $z\in \mathbb{R}_{+}^{n}$ and $\delta>0$. Set $B_{\delta}^{+}(z)=B_{\delta}(z)\cap \mathbb{R}_{+}^{n}$, where $B_{\delta}(z)=\{y\in \mathbb{R}^{n}:|z-y|<\delta\}$. We consider two cases\\
Case 1.\,\,\,\, $0\leq z_{n}<2\delta.$ In this case, we write
$$
f(y)=f(y)\chi_{B_{2^{4}\delta}^{+}(z)}(y)+\sum_{i=4}^{\infty}f(y)\chi_{B_{2^{i+1}\delta}^{+}(z)/B_{2^{i}\delta}^{+}(z)}(y)\equiv \sum_{i=3}^{\infty}f_{i}(y).
$$
Therefore, by the $L^{p}_{+}(w)$ boundedness of $\mathfrak{T}$ and
(\ref{(2.2)}), we obtain
\begin{align*}
&\frac{1}{w(B_{\delta}^{+})^{\lambda/p}}\left(\int_{B_{\delta}^{+}}|\mathfrak{T}f(x)|^{p}w(x)dx\right)^{1/p}\\
&\leq \frac{1}{w(B_{\delta}^{+})^{\lambda/p}}\sum_{i=3}^{\infty}\left(\int_{B_{\delta}^{+}}|\mathfrak{T}f_{i}(x)|^{p}w(x)dx\right)^{1/p}\\
&\leq \frac{C}{w(B_{\delta}^{+})^{\lambda/p}}\|f_{3}\|_{L^{p}_{+}(w)}+\frac{C}{w(B_{\delta}^{+})^{\lambda/p}}\sum_{i=4}^{\infty}
\left(\int_{B_{\delta}^{+}}\left(\int_{B_{2^{i+1}\delta}^{+}(z)/B_{2^{i}\delta}^{+}(z)}\frac{|f(y)|}{|\tilde{x}-y|^{n}}dy\right)^{p}w(x)dx\right)^{1/p}\\
&\leq C\|f\|_{M_{p,\lambda}^{+}(w)}+C\sum_{i=4}^{\infty}\frac{1}{(2^{i}\delta)^{n}}
\left(\int_{B_{2^{i+1}\delta}^{+}}|f(y)|dy\right)w(B_{\delta}^{+})^{1-\lambda/p}\\
&\leq C\|f\|_{M_{p,\lambda}^{+}(w)}\left(1+\sum_{i=4}^{\infty}\frac{w(B_{\delta}^{+})^{1-\lambda/p}}{w(B_{2^{i+1}\delta}^{+})^{1-\lambda/p}}\right)
\\
&\leq C\|f\|_{M_{p,\lambda}^{+}(w)}.
\end{align*}
In the last inequality, we use Lemma \ref{lemma 3.1} in Section \ref{section2}.\\
Case 2.\,\,\,\,There exists $i\in \mathbb{N}$ such that
$2^{i}\delta\leq z_{n}<2^{i+1}\delta$. In this case, we write
$$
f(y)=f(y)\chi_{B_{2^{i+1}\delta}^{+}(z)}(y)+\sum_{j=1}^{\infty}f(y)\chi_{B_{2^{i+j+4}\delta}^{+}(z)}(y)\equiv \sum_{j=0}^{\infty}f_{j}(y).
$$
By (\ref{(2.2)}) and Lemma \ref{lemma 3.1}, we have
\begin{align*}
&\frac{1}{w(B_{\delta}^{+})^{\lambda/p}}\left(\int_{B_{\delta}^{+}}|\mathfrak{T}f(x)|^{p}w(x)dx\right)^{1/p}\\
&\leq \frac{C}{w(B_{\delta}^{+})^{\lambda/p}}\left(\int_{B_{\delta}^{+}}\left(\int_{B_{2^{i+4}\delta}^{+}(z)}\frac{|f(y)|}{|\tilde{x}-y|^{n}}dy\right)^{p}w(x)dx\right)^{1/p}\\
&\quad +\frac{C}{w(B_{\delta}^{+})^{\lambda/p}}\sum_{j=1}^{\infty}
\left(\int_{B_{\delta}^{+}}\left(\int_{B_{2^{i+j+4}\delta}^{+}(z)/B_{2^{i+j+3}\delta}^{+}(z)}\frac{|f(y)|}{|\tilde{x}-y|^{n}}dy\right)^{p}w(x)dx\right)^{1/p}\\
&\leq \frac{C}{w(B_{\delta}^{+})^{\lambda/p}}\frac{1}{(2^{i}\delta)^{n}}\left(\int_{B_{\delta}^{+}}\left(\int_{B_{2^{i+4}\delta}^{+}(z)}|f(y)|dy\right)^{p}w(x)dx\right)^{1/p}\\
&\quad +\frac{C}{w(B_{\delta}^{+})^{\lambda/p}}\sum_{j=1}^{\infty}\frac{1}{(2^{i+j}\delta)^{n}}\left(\int_{B_{\delta}^{+}}\left(\int_{B_{2^{i+j+4}\delta}^{+}(z)}|f(y)|dy\right)^{p}w(x)dx\right)^{1/p}\\
&\leq C\|f\|_{M_{p,\lambda}^{+}(w)}\left(\frac{w(B_{\delta}^{+})^{1-\lambda/p}}{w(B_{2^{i+4}\delta}^{+})^{1-\lambda/p}}
+\sum_{j=1}^{\infty}\frac{w(B_{\delta}^{+})^{1-\lambda/p}}{w(B_{2^{i+j}\delta}^{+})^{1-\lambda/p}}\right)\\
&\leq C\|f\|_{M_{p,\lambda}^{+}(w)}.
\end{align*}\end{proof}
\subsection*{Acknowledgements}
The authors thank Professor Heping Liu for the valuable suggestions.

\end{document}